\documentclass[12pt]{article}
\usepackage{amsmath,amssymb,a4wide,amsthm,color,graphicx,verbatim,algorithmic,algorithm,caption,enumerate,multirow}
\usepackage{epstopdf}
\usepackage{amsfonts}
\usepackage{refcount}
\usepackage{cleveref}
\usepackage{caption}
\usepackage{subcaption}

\usepackage[normalem]{ulem}
\newtheorem{theorem}{Theorem}[section]

\newtheorem{rem}[theorem]{Remark}

\newenvironment{remark}{\begin{rem} \em }{\em \end{rem}}

\newtheorem{proposition}[theorem]{Proposition}
\newtheorem{ex}[theorem]{Example}
\newenvironment{example}{\begin{ex} \em }{\em \end{ex}} 
\newtheorem{definition}[theorem]{Definition}
\newtheorem{assumption}[theorem]{Assumption}

\newcommand{\norm}  [1]{\ensuremath{\left  \|       #1  \right \|       }}

\newcommand{\abs  } [1]{\ensuremath{\left  |       #1  \right |        }}

\usepackage{color}
\definecolor{orange}{RGB}{250, 54, 0}
\definecolor{purple}{rgb}{0.75, 0.0, 1.0}

\definecolor{col_gk}{rgb}{0.5, 0.0, 0.5}

\definecolor{gray}{rgb}{0.52, 0.52, 0.51}

\newcommand{\cl}  {{\rm cl  \,}}
\newcommand{\bd}  {{\rm bd \,}}

\newcommand{\Int} {{\rm int \,}}
\newcommand{\conv}  {{\rm conv \,}}
\newcommand{\cone}{{\rm cone\,}}

\newcommand{\dom}{{\rm dom\,}}
\newcommand{\ri}{{\rm ri\,}}
\newcommand{\X}{\mathcal{X}}
\newcommand{\Y}{\mathcal{Y}}
\newcommand{\Z}{\mathcal{Z}}
\renewcommand{\P}{\mathcal{P}}
\newcommand{\R}{\mathbb{R}}

\newcommand{\T}{\mathsf{T}}
\newcommand{\N}{\mathbb{N}}

\newcommand{\tr}{{\rm tr}}

\DeclareMathOperator*{\argmin}{arg\,min}

\author{Gabriela Kov\'{a}\v{c}ov\'{a} \thanks{Vienna University of Economics and Business, Institute for Statistics and Mathematics, Vienna A-1020, AUT, gabriela.kovacova@wu.ac.at, ORCID ID: 0000-0003-2088-0597} \and Firdevs Ulus \thanks{Corresponding author. Bilkent University, Department of Industrial Engineering, Ankara, 06800 Turkey, firdevs@bilkent.com.tr, ORCID ID: 0000-0002-0532-9927} }

\title{%Solving a class of unbounded convex vector optimization problems via convex projection \\ (OR?)\\ 
	Computing {the} recession cone of a convex upper image via convex projection}

\date{\today}

\begin{document}
\maketitle

%How is the funding note done properly?
%\let\thefootnote\relax\footnotetext{\textbf{Funding:} G. Kov\'{a}\v{c}ov\'{a} and F. Ulus acknowledge support from the OeNB anniversary fund, project number 17793.}

\begin{abstract} 

\medskip
It is possible to solve unbounded convex vector optimization problems (CVOPs) in two phases: (1) computing or approximating the recession cone of the upper image and (2) solving the equivalent bounded CVOP where the ordering cone is extended based on the first phase \cite{WURKH22}. In this paper, we consider unbounded CVOPs and propose an alternative solution methodology to compute or approximate the recession cone of the upper image. In particular, we relate the dual of the recession cone with the Lagrange dual of weighted sum scalarization problems whenever the dual problem can be written explicitly. Computing this set requires solving a convex (or polyhedral) projection problem. We show that this methodology can be applied to semidefinite, quadratic, and linear vector optimization problems and provide some numerical examples. 

\medskip 
\noindent
{\bf Keywords:} Convex vector optimization, linear vector optimization, unbounded vector optimization, recession cone, convex projection

\medskip

\noindent
{\bf MSC 2010 Classification:} 	90B50, 90C29, 90C25, 90C05, 90C20, 90C22.

\end{abstract}

\section{Introduction} \label{sect:intro}

Multiobjective optimization, in which there are multiple conflicting objective functions to be minimized simultaneously, has been studied extensively in the literature as %\out{there are many} 
application areas %\out{ranging} 
{range} from engineering to natural sciences. Vector optimization is a generalization, {where values of the objective function might not be compared in an element-wise fashion. Stated in technical terms, the order relation on the objective space is determined by an ordering cone, which may differ from the positive orthant. Vector optimization plays an important role in application areas such as financial mathematics \cite{FeiRud2017,HLR13}, economics \cite{RudUlu2021}, and game theory \cite{feinstein2023characterizing}, where the ordering cone of the problem is naturally different than the positive orthant.} 

There are various solution concepts and approaches regarding vector optimization problems in the literature, see for instance \cite{Jahn04, Luc88}. {In contrast to single-objective optimization, there is usually no unique optimal objective value. Instead, one is interested in \emph{Pareto optimal} solutions whose objective values are minimal with respect to the order relation of the problem. The set of objective values of all Pareto optimal points is referred to as the \emph{Pareto frontier}. For technical reasons, we will instead work with the \emph{upper image}, the image of the feasible set of the problem plus the ordering cone. Crucially, the Pareto frontier is contained in the boundary of the upper image. } %\out{In general, there is no unique optimal objective function value as in the scalar-valued \Fir{(or single objective?)} optimization. Instead, one is interested in the solutions whose objective values are minimal (or weakly minimal) with respect to the order relation of the problem. The set of all such values in the objective space is referred to as the \emph{Pareto frontier}. One important concept regarding vector optimization problems is the so called \emph{upper image}, which is the image of the feasible region plus the ordering cone. It is known that the Pareto frontier is a subset of the boundary of upper image.}

In this paper, we focus on convex vector optimization problems (CVOPs), {where the objective function and the feasible set are appropriately convex. Linear vector optimization problems (LVOPs) 
form an important subclass of CVOPs. An important solution concept for LVOPs (see~\cite{Loehne11}) motivated by set optimization aims to generate the Pareto frontier. Since the upper image of an LVOP is known to be a polyhedron, it can be generated by finitely many extreme points (vertices) and extreme directions. Methods and algorithms exist for solving LVOPs in this sense; for more details see e.g.,~\cite{Ben98,Csirmaz13,HLR13,Loehne11,ShaEhr08}.} %\out{where the objective function is assumed to be convex with respect to the order relation induced by cone $C$ and the feasible region is assumed to be a convex set. An important subclass of CVOPs \out{are} {is} linear vector optimization problems (LVOPs).  In this case, the upper image is known to be a polyhedral convex set. } \out{Among different solution concepts for LVOPs, there is one motivated by set optimization point of view and it aims to generate the Pareto frontier. Indeed, as the upper image is a polyhedral convex set in this case, it is sufficient to generate the finitely many extreme points (vertices) and extreme directions of it.} \out{What is conventionally understood as solving a LVOP is generating the set of all (weakly) minimal elements of its upper image, or generating its Pareto frontier.} \out{For more details, methods and algorithms see for instance \cite{Ben98,Csirmaz13,HLR13,Loehne11,ShaEhr08}. } 
In the case of a CVOP, {the upper image is a convex set, but not necessarily polyhedral. Hence,} it is commonly {infeasible} %not possible 
to compute the exact Pareto frontier. Instead, {there are solution concepts that generate approximations to it.} %\out{one aims to approximate it. } 

Whether a CVOP is \textit{bounded} or \textit{unbounded} determines what solution concepts and what solution methods are available. In vector optimization, a bounded problem is characterized by its upper image (and its Pareto frontier) being a subset of a shifted ordering cone. In the multiobjective case, this simplifies to each objective being bounded from below on the feasible set. {Note that unbounded problems are encountered for instance in the computations of indifference prices under incomplete preferences \cite{RudUlu2021} or in the implementation of the set-valued Bellman principle \cite{KovacovaRudloff2021}.} %\Fir{Can we add anything else? } \gk{We will have unbounded projection(!) in the implementation of our Bellman. But I don't want to go into details here, because it would be just more confusing.}

Bounded problems comprise the less challenging class: the Pareto frontier of a bounded LVOP can be generated by finding its extreme points. For a bounded CVOP, one aims to find finitely many {Pareto optimal} elements that generate both inner and outer polyhedral approximation of the Pareto frontier. There are solution algorithms such as \cite{AraUlusUmer2021, DorLohSchWei2021,EhrShaSch11,LRU14} capable of solving bounded CVOPs in this sense.

%{Not}

Unbounded problems present an additional challenge -- one also has to compute (or approximate) the recession directions of the upper image. One of the solution methods for unbounded LVOPs can be found in \cite{Loehne11}. In the first phase, the recession cone of the upper image {is computed by solving a modified LVOP (the so-called \emph{homogeneous problem}) which is of the same dimension as the original problem but known to be bounded}. In the second phase, the ordering cone is replaced by the found recession cone, which transforms the original unbounded LVOP into an equivalent bounded LVOP. Recently, \cite{WURKH22} proposed a solution concept and a solution algorithm to solve unbounded CVOPs. The solution approach is similar to the linear case and consists of two phases. In the first phase, an outer approximation of the recession cone is algorithmically computed. This outer approximation is then used to transform the problem into a bounded one so that existing algorithms can be applied in the second phase.

In this paper, we propose an alternative way of approximating the recession cone of the upper image{, that is, the first phase given in \cite{WURKH22}. In \cite{WURKH22}, this is done by solving Pascoletti-Serafini scalarizations (see \cite{pascoletti1984scalarizing}) and updating an approximation of the desired recession cone in an iterative manner. In this paper, instead of computing an approximation of the recession cone itself as in \cite{WURKH22}, we consider the dual cone of it. We use a characterization of the dual cone from \cite{Ulus18}} given in terms of the well-known weighted sum scalarizations. We observe that for some classes of CVOPs, it is possible to write the dual of the recession cone explicitly. Then, computing this set reduces to solving a bounded convex projection problem \cite{KovacovaRudloff2021}. For the special case of LVOPs, it is possible to compute the recession cone exactly by solving a bounded polyhedral projection problem \cite{LoeWei15}. Moreover, in this case, it is possible to reduce the dimension of the projection problem by one. 

When the dimension of the objective space is two, the procedure simplifies further and it reduces to solving two convex (or linear, if the corresponding problem is linear) scalar optimization problems. Compared to applying the algorithm from \cite{WURKH22} or solving a two-dimensional {LVOP} as in \cite{Loehne11}, solving two convex or linear programs is simpler and more efficient. 

The structure of the paper is as follows. In Section~\ref{sect:prelim} we provide notation and preliminaries. In Section~\ref{sect:problem} we introduce the convex vector optimization problem and the relevant solution concepts. Section~\ref{sect:recCone} introduces a method for approximating the recession cone of the upper image based on its connection to the set of weights for which the weighted sum scalarization is bounded. In Section~\ref{sect:ex} we discuss particular problem classes for which this method yields representation in the form of a convex projection problem. Section~\ref{sect:comp} provides examples.

\section{Preliminaries} \label{sect:prelim}

Let $q \in \mathbb{N}$ and $\R^q$ be the $q$-dimensional Euclidean space. Throughout the paper, we primarily use the $\ell_2$ norm $\|y\| := \norm{y}_2 = \left( \sum_{i=1}^q  \abs{y_i}^2 \right)^{\frac{1}{2}}$ on $\R^q$. We will shortly remark on results under the {$\ell_p$ norm $\norm{y}_p = \left( \sum_{i=1}^q  \abs{y_i}^p \right)^{\frac{1}{p}}$ for $p \in [1, \infty)$} and the $\ell_\infty$ norm $\norm{y}_\infty = \max_{i\in \{1,\ldots,q\}} \abs {y_i}$. The (closed $\ell_2$) ball centered at point $c \in \R^q$ with radius $r > 0$ is denoted by $B(c,r) :=\{y \in \R^q \mid \norm{y-c} \leq r\}$.

The interior, closure, boundary, and convex hull of a set $A \subseteq \R^q$ are denoted by $\Int A, \cl A, \bd A$, and $\conv A$, respectively. The \textit{(convex) conic hull} of $A$,
$$\cone A := \left\lbrace\sum_{i=1}^n \lambda_ia_i \mid n\in \N, \lambda_1,\ldots,\lambda_n \geq 0, a_i\ldots,a_n \in A\right\rbrace,$$
is the set of all conic combinations of points from $A$.
The \textit{recession cone} of a set $A$ is
$$A_{\infty} = \left\lbrace d \in \R^q  \mid  a + \lambda d \in A \quad \forall a \in A, \lambda \geq 0 \right\rbrace.$$
 For two sets $A,B\subseteq \R^q$, their sum is understood as their \textit{Minkowski sum} 
$$A+B:=\{a+b \in \R^q \mid a \in A, b\in B\},$$
and their distance is measured via the \textit{Hausdorff distance}
$$d^H(A,B) := \max \left\lbrace \sup_{a\in A} \inf_{b\in B} \norm{a-b},\sup_{b\in B} \inf_{a\in A} \norm{a-b} \right\rbrace.$$ 
If a different norm is considered, the Hausdorff distance can be defined analogously. We denote by $A-B$, the set $A + (-1)\cdot B = \{a-b \mid a\in A, b\in B\}$.

A set $A \subseteq \R^q$ is a \textit{polyhedron} if it can be identified through finitely many vertices $v_1, \dots, v_{k_v} \in \R^q, k_v \in \N$ and directions $d_1, \dots, d_{k_d} \in \R^q \setminus \{0\}, k_d \in \N\cup\{0\}$ as
\begin{equation} \label{eq:Vrep}
	A = \conv \{v_1, \dots, v_{k_v}\} + \cone \{d_1, \dots, d_{k_d}\}.
\end{equation}
 A polyhedron can also be represented as a finite intersection of halfspaces.

The \textit{dual cone} of a set $A \subseteq \R^q$ is $A^+ := \{ w \in \R^q \mid \forall a \in A: w^\T a \geq 0\}$. A cone $C \subseteq \R^q$ is \textit{nontrivial} if $\{0\} \subsetneq C \subsetneq \R^q$. It is \textit{pointed} if it does not contain any line through the origin. A cone $C \subseteq \R^q$ generates an order on $\R^q$ given through
$$x \leq_C y \iff y \in \{x\} + C$$
for $x, y \in \R^q$. If $C$ is a nontrivial, pointed, convex ordering cone, then $\leq_C$ is a partial order. A function $f: \R^n \to \R^q$ is \textit{$C$-convex} if for all $x, y \in \R^n$, and all $\lambda \in [0, 1]$ it holds
$$f(\lambda x + (1-\lambda)y ) \leq_C \lambda f(x) + (1-\lambda)f(y).$$

\textit{Convex projection} is a problem of the form
\begin{align*}
\text{compute } Y = \left\lbrace y \in \R^m \mid \exists x \in \R^n: (x, y) \in S \right\rbrace,
\end{align*}
where $S \subseteq \R^n\times \R^m$ is a convex feasible set. If the feasible set $S$ is a polyhedron, then the problem is a \textit{polyhedral projection}. Under solving a projection problem, we understand computing the set $Y$ (if polyhedral) or an approximation of it (otherwise) in the sense of finding a representation as in \eqref{eq:Vrep}. More details on polyhedral projections can be found in \cite{LoeWei15}, and on convex projection in \cite{KovacovaRudloff2021,SZC18}.

\section{Problem description} 
\label{sect:problem}
In this section, we introduce a convex vector optimization problem (CVOP) and its upper image. The main object of interest in this work is the recession cone of the upper image. Its importance can be seen by the role it plays in the boundedness properties of CVOPs and the appropriate solution concepts. 

A convex vector optimization problem is
\begin{align*}
\tag{P} \label{eq:P}  \text{minimize } f(x) \quad \text{ with respect to\ } \leq_C\quad\text{ subject to } h(x) \leq 0,
\end{align*}
where $C \subseteq \mathbb R^q$ is a nontrivial, pointed, convex ordering cone with a non-empty interior and $h:\R^n \to \R^m$ and $f: \R^n \to \R^q$ are continuous functions that are $\R^m_+$- and $C$-convex, respectively. We denote the feasible region by $\X := \{x\in \R^n \mid h(x) \leq 0\}$ and its image by $f(\X):=\{f(x) \mid x \in \X\}$. The {\em{upper image}} of \eqref{eq:P} is defined as 
$$\mathcal{P}:=\cl \left( f(\X)+C \right).$$ 
Here we are particularly interested in the recession cone of the upper image, that is, $\P_\infty$. We encounter it within the boundedness notions for CVOPs recalled below.
\begin{definition}\cite[Definitions 4.2, 4.5]{Ulus18}
Problem \eqref{eq:P} is called \emph{bounded} if there is a point $\hat{p} \in \R^q$ such that $\P \subseteq \{\hat{p}\}+C$; and \emph{unbounded} otherwise. Problem \eqref{eq:P} is called \emph{self-bounded} if $\mathcal{P} \neq \R^q$ and there is a point $\hat{p} \in \R^q$ such that $\P \subseteq \{\hat{p}\}+\P_\infty$. 
\end{definition}
Note that for a bounded problem, it holds $\P_\infty = \cl C$, {see~\cite[Lemma 2.2]{KovacovaRudloff2021}.} A bounded problem is, in particular, always self-bounded. However, an unbounded problem can be self-bounded or not. {An illustration is provided in \Cref{fig0}.} We refer readers to~\cite{Ulus18} for {more} examples and more in-depth discussion. 

\begin{figure}[H]
	\centering
		\includegraphics[width=0.4\linewidth]{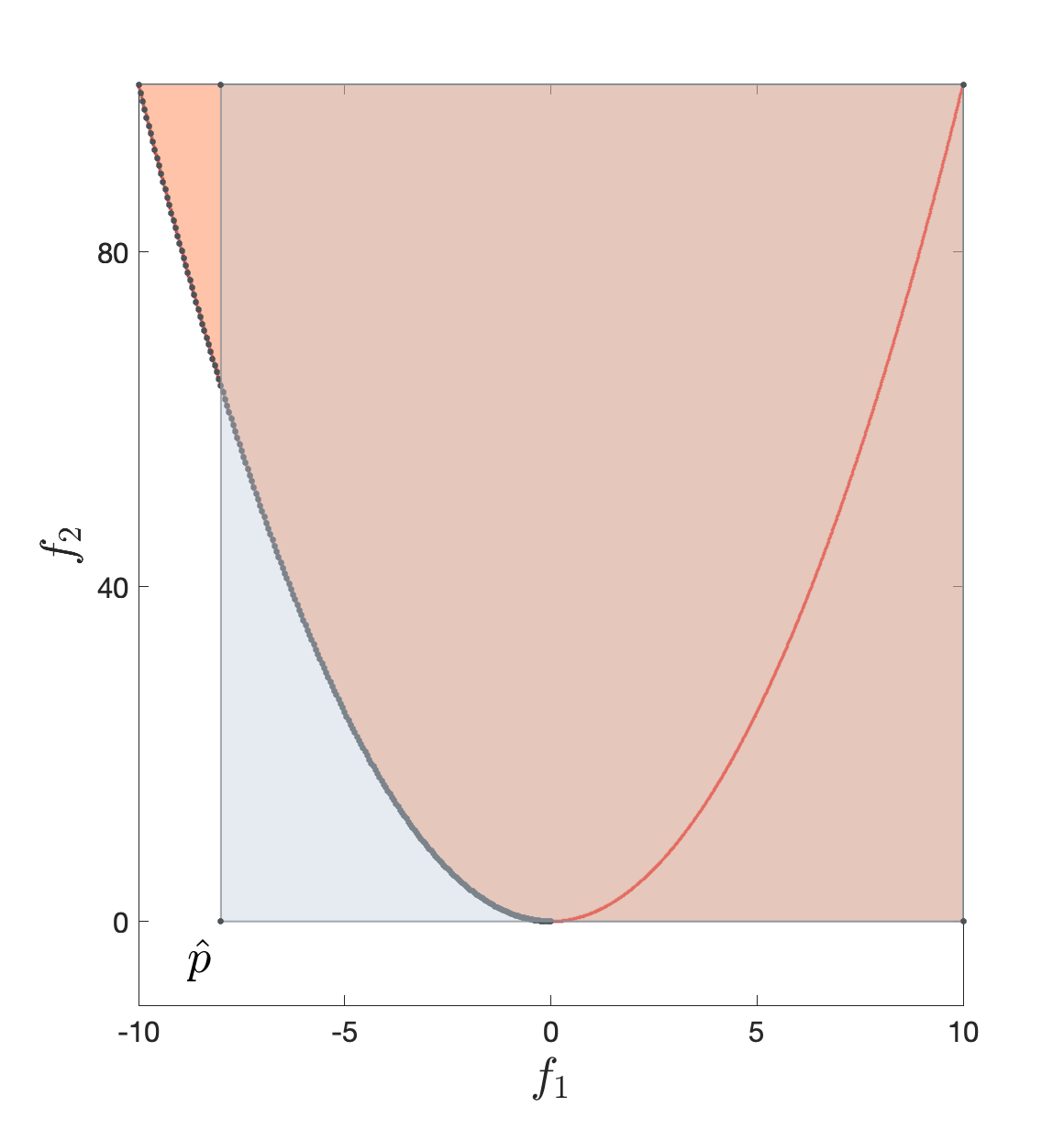}
	
	\caption{{The image $f(\mathcal{X})$ and the upper image $\P$ for the problem with $f(x)= (x,x^2)^\T$, $\mathcal{X} = \R$ and $C = \R^2_+$. The bold line indicates $\bd \P \cap f(\X)$ and the light region shows $\{\hat{p}\}+C$ for $\hat{p} = (-8,0)^\T$. The recession cone of the upper image is $\R^2_+$. This problem is neither bounded nor self-bounded. } }
		\label{fig0}
\end{figure}
 
An appropriate solution concept for a CVOP depends on whether the problem is bounded or not. Solution concepts for bounded CVOPs are proposed in~\cite{AraUlusUmer2021, DorLohSchWei2021,LRU14} and for self-bounded problems in~\cite{Ulus18}. {According to these, a solution consists of finitely many minimal elements on the boundary of the upper image $\P$ which} generate both an inner and an outer approximation of {it}. The self-bounded case, however, contains challenges: In general, it is difficult to check if a CVOP is self-bounded. Moreover, the solution concept of~\cite{Ulus18} includes the recession cone $\P_\infty$. However, computing $\P_\infty$ exactly may not be possible if it is not polyhedral.

Recently, a generalized solution concept was proposed in~\cite{WURKH22}, which includes an approximation of the recession cone $\P_\infty$ of the upper image. This solution concept is tailored for unbounded problems, but it is applicable for a CVOP regardless of whether it is (self-) bounded or not. Similarly to the above, it also yields polyhedral approximations of the upper image. We will provide this solution concept below explicitly as it illustrates the importance of approximating the recession cone $\P_\infty$.

First, we define approximations of a convex cone. Interested readers can also compare this to the definition in~\cite{Doerfler22} for convex sets.
\begin{definition}{\label{defn:approxCone}}
Let $K\subseteq \R^q$ be a convex cone. A finite set $\Y \subseteq \R^q$ is called a \emph{finite $\delta$--outer approximation} of $K$ if $K \subseteq \cone \Y$ and $d^H \left( K \cap B(0,1), \cone \Y \cap B(0,1) \right) \leq \delta$. Similarly, a finite set $\Z \subseteq \R^q$ is called a \emph{finite $\delta$--inner approximation} of $K$ if $K \supseteq \cone \Z$ and $d^H \left( K \cap B(0,1), \cone \Z \cap B(0,1) \right) \leq \delta$.
\end{definition}

Definition~\ref{defn:approxCone} differs slightly from {\cite[Definition 3.3]{WURKH22}}: Here the $\ell_2$ norm (and the corresponding Hausdorff distance) is used, while~\cite{WURKH22} applied the $\ell_1$ norm to measure distance. The $\ell_1$ norm was chosen in~\cite{WURKH22} primarily for algorithmic reasons. Here we opt for the $\ell_2$ norm for pragmatic reasons: Since we will work with dual cones, the $\ell_2$ norm has the advantage of being self-dual. Alternatively, we could work with the pair of $\ell_1$- and $\ell_\infty$ norms, but this would create a cumbersome terminology. When the choice of the norm(s) impacts the results of the paper, we provide corresponding remarks. 

Now we can define a solution of a CVOP, where $c \in \Int C$ with $\Vert c \Vert = 1$ is a fixed element. First, recall that a point $\bar{x} \in \X$ is called a \emph{minimizer} for \eqref{eq:P} if $f(\bar{x})$ is a $C$-minimal element of $f(\X)$, that is, if $(\{f(\bar{x})\}-C\setminus \{0\}) \cap f(\X) = \emptyset$. Similarly, $\bar{x} \in \X$ is called a \emph{weak minimizer} for \eqref{eq:P} if $f(\bar{x})$ is a weakly $C$-minimal element of $f(\X)$, that is, if $(\{f(\bar{x})\}-\Int C) \cap f(\X) = \emptyset$. 

\begin{definition}{\label{defn:solutionconcept}}
	A pair $(\bar{\X},\mathcal{Y})$ is a \emph{(weak) $(\varepsilon,\delta)$--solution} of~\eqref{eq:P} if $\bar{\X} \neq \emptyset$ is a set of (weak) minimizers {for \eqref{eq:P}}, $\mathcal{Y}$ is a $\delta$--outer approximation of $\mathcal{P}_{\infty}$ and it holds
	\begin{align*}
	\mathcal{P} \subseteq \conv {f} (\bar{\X})+\cone \mathcal{Y}-\varepsilon\{c\}.
	\end{align*}
	A (weak) $(\varepsilon,\delta)$--solution $(\bar{\X},\mathcal{Y})$ of~\eqref{eq:P} is a \emph{finite (weak) $(\varepsilon,\delta)$--solution} of~\eqref{eq:P} if the sets $\bar{\X},\mathcal{Y}$ consist of finitely many elements.
\end{definition}

An approach to compute a solution of a CVOP in the sense of \Cref{defn:solutionconcept} is provided in \cite{WURKH22}. It was shown that once an outer approximation of $\P_\infty$ is available, the algorithms for bounded CVOPs can be used to find a solution in the sense of \Cref{defn:solutionconcept}. This is done by transforming the (unbounded) CVOP into a bounded one by replacing the ordering cone with the outer approximation of $\P_\infty$. \cite{WURKH22} also contains an algorithm for computing a finite $\delta$-outer approximation of $\P_\infty$.

In this paper, we provide an alternative approach to compute a polyhedral approximation of $\P_\infty$.
We consider some special classes of CVOPs for which we can compute a finite $\delta$-outer approximation $\Y$ of $\P_\infty$ by solving a particular convex projection problem. For example, we will see that if we consider linear vector optimization problems, then we can compute the exact $\P_\infty$ by solving a polyhedral projection problem. 

\section{Approximating $\P_\infty$ via $\P_\infty^+$} 
\label{sect:recCone}

Let us propose an approach to compute an approximation of $\P_\infty$ by approximating its dual $\P_\infty^+$. It is based on the known close connection between the dual of the recession cone $\P_\infty^+$ and the set of weights for which the weighted sum scalarization of the CVOP is a bounded problem. The boundedness of a scalar (weighted sum scalarization) problem can be verified through the feasibility of its dual problem, assuming strong duality. Expressing the cone $\P_\infty^+$ through a set of weights for which the dual problem is feasible can be interpreted through the lens of a projection problem. This interpretation should become clearer for the particular special cases considered in Section~\ref{sect:ex}. Solving this projection problem provides an inner approximation of $\P_\infty^+$. We show that an inner approximation of $\P_\infty^+$ yields an outer approximation of $\P_\infty$ with an appropriate tolerance.

Let us start by recalling the weighted sum scalarization of \eqref{eq:P}, which is given by
\begin{align*}
\tag{P$_w$} \label{eq:Pw}  \text{minimize } w^\T f(x) \quad\text{ subject to } \quad h(x) \leq 0
\end{align*}
for $w \in \R^q\setminus\{0\}$. It is well known that if $w \in C^+\setminus \{0\},$ then an optimal solution of \eqref{eq:Pw} is a weak minimizer of \eqref{eq:P}, see~\cite[Theorem 5.28]{Jahn04}. On the other hand, for a weak minimizer $\bar{x}\in \R^n$, there exists $w\in C^+$ such that $\bar{x}$ is an optimal solution to \eqref{eq:Pw}, see~\cite[Theorem 5.13]{Jahn04}. This shows us that for CVOPs one is interested in solving \eqref{eq:Pw} for $w \in C^+$. However, the weighted sum scalarization problem may be unbounded for some $w \in C^+$ if \eqref{eq:P} is not bounded. The set of weights for which the weighted sum scalarization is bounded, denoted by 
$$W:= \{w \in C^+ \mid \inf_{x\in \X} w^\T f(x) \in \R \},$$
will play an important role. The following proposition gives a relationship between the dual cone of $\P_\infty$ and $W$.

\begin{proposition} \cite[Proposition 4.12 and Theorem 4.14]{Ulus18} \label{prop:weightset}
	It holds true that $\P_\infty^+ = \cl W$. If \eqref{eq:P} is self-bounded, then $\P_\infty^+ = W$. Furthermore, if $\{0\} \neq \P_\infty^+ = W$, then the problem is self-bounded.
\end{proposition}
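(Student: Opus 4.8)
The plan is to route everything through the classical duality between recession cones and domains of (lower) support functions. Set $\psi(w):=\inf_{p\in\P}w^\T p$ for $w\in\R^q$. Since $\P=\cl(f(\X)+C)$ and $p\mapsto w^\T p$ is continuous, $\psi(w)=\inf_{x\in\X}w^\T f(x)+\inf_{c\in C}w^\T c$, and as $C$ is a cone the last summand is $0$ when $w\in C^+$ and $-\infty$ otherwise; hence (with $\X\neq\emptyset$) $\{w\in\R^q:\psi(w)\in\R\}=W$. Two facts drive the argument: (i) for any nonempty closed convex set $A$, the closure of $\{w:\inf_{a\in A}w^\T a>-\infty\}$ equals $A_\infty^+$ — equivalently, the barrier cone of $A$ is dense in the polar cone of $A_\infty$ (a standard result in convex analysis); and (ii) by the bijection between closed convex sets and their support functions, for a closed convex set $A$ and any $\hat p\in\R^q$ the inclusion $A\subseteq\{\hat p\}+\P_\infty$ is equivalent to having $\inf_{a\in A}w^\T a\geq\hat p^\T w$ for all $w\in\P_\infty^+$ (the inequality is automatic for $w\notin\P_\infty^+$, where $\inf_{d\in\P_\infty}w^\T d=-\infty$). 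Applying (i) with $A=\P$ yields the first assertion $\P_\infty^+=\cl W$.

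For the second assertion, suppose \eqref{eq:P} is self-bounded, say $\P\subseteq\{\hat p\}+\P_\infty$. Fixing $\bar x\in\X$ we have $\P\supseteq\{f(\bar x)\}+\cl C$, so $\P_\infty\supseteq(\cl C)_\infty=\cl C$ and hence $\P_\infty^+\subseteq(\cl C)^+=C^+$. By (ii), for every $w\in\P_\infty^+$ we get $\psi(w)\geq\hat p^\T w>-\infty$, so $w\in W$; thus $\P_\infty^+\subseteq W$, and combined with $W\subseteq\cl W=\P_\infty^+$ from the first assertion this gives $W=\P_\infty^+$.

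For the third assertion, assume $\{0\}\neq\P_\infty^+=W$. That $\P\neq\R^q$ is immediate: any $w\in\P_\infty^+\setminus\{0\}$ confines $\P_\infty$ to the halfspace $\{d:w^\T d\geq0\}\subsetneq\R^q$, so $\P_\infty\neq\R^q$ and therefore $\P\neq\R^q$. By (ii), it remains to produce $\hat p$ with $\psi(w)\geq\hat p^\T w$ for all $w\in\P_\infty^+$. Since $C$ has non-empty interior, $\cl C\subseteq\P_\infty$ is solid, so $\P_\infty^+$ is a pointed closed convex cone; choosing $e\in\Int\P_\infty$ (so $e^\T w>0$ on $\P_\infty^+\setminus\{0\}$) produces a compact base $B:=\P_\infty^+\cap\{w:e^\T w=1\}$. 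On $B$ the map $\psi$ is real-valued, concave, and upper semicontinuous. If one can show $\psi\geq-M$ on $B$ for some $M\geq0$, then positive homogeneity of $\psi$ gives $\psi(w)\geq-M\,e^\T w=(-Me)^\T w$ for all $w\in\P_\infty^+$, so $\hat p:=-Me$ works and \eqref{eq:P} is self-bounded.

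The whole argument thus collapses onto a single substantive estimate, which I expect to be the main obstacle: \emph{$\psi$ is bounded below on the compact base $B$}. This cannot follow from concavity alone — a real-valued concave function on a compact convex set need not be bounded below — so the proof must exploit the finer structure at hand: that $\psi(w)=-\sigma_\P(-w)$ with $\sigma_\P$ the (closed) support function of $\P$, whence $\psi$ is upper semicontinuous; that the hypothesis $W=\P_\infty^+$ forces $W$, i.e.\ the barrier cone of $\P$, to be closed; and the special geometry of $\P=\cl(f(\X)+C)$. Carrying out this boundedness argument is where the real work sits; the recession-cone and support-function manipulations in the preceding steps are routine.
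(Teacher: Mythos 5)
Your proposal concerns a statement that this paper does not prove at all but imports from \cite[Proposition 4.12 and Theorem 4.14]{Ulus18}, so it can only be judged on its own terms. On those terms, the first two assertions are handled correctly and completely: the identification $\{w \mid \psi(w)\in\R\}=W$ (using $\inf_{c\in C}w^\T c\in\{0,-\infty\}$), the polarity between the barrier cone and the recession cone of the closed convex set $\P$, the support-function criterion for $\P\subseteq\{\hat p\}+\P_\infty$ via the bipolar theorem, and the inclusion $\P_\infty^+\subseteq C^+$ obtained from $\cl C\subseteq\P_\infty$ are all sound, standard, and correctly assembled.

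The third assertion, however, is not proved, as you yourself concede, and the gap is not a deferred technicality but the entire content of that assertion. By your own fact (ii) together with positive homogeneity of $\psi$, the statement ``$\psi$ is bounded below on the compact base $B$'' is \emph{equivalent} to ``there exists $\hat p$ with $\P\subseteq\{\hat p\}+\P_\infty$''; so your part three amounts to restating the claim and observing that it does not follow from concavity. More importantly, it also does not follow from the hypothesis you have actually isolated, namely that $W=\P_\infty^+$ is closed (equivalently, that the support function $\sigma_\P$ has closed domain). There exist closed sublinear functions that are finite on a closed convex cone yet unbounded above on a compact base of it: take the positively homogeneous (perspective) extension of a lower semicontinuous convex function that is finite but unbounded on the unit disk, e.g.\ $\sup_{n\in\N}\left(n^2x-n^3(1-y)\right)$. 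For the closed convex set whose support function this is, the barrier cone is closed and equals the dual of the recession cone, yet the set is contained in no translate of its recession cone. Consequently, the boundedness you need cannot be extracted from the general convex-analytic facts you have set up, and closedness of $W$ alone is not the lever; whatever argument establishes the third assertion must exploit something beyond what your reduction retains. You should either reproduce the actual argument of \cite{Ulus18} or identify precisely which additional structure of $\P=\cl(f(\X)+C)$ it uses; as written, your proof of the third assertion is a reformulation, not a proof.
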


Recall that $f$ is a $C$-convex function. Then for all $w\in C^+$ is $w^\T f: \R^n \to \R$ a convex function and hence \eqref{eq:Pw} is a convex optimization problem. 
The Lagrangian $\mathcal{L}: \R^n \times \R^m \to \R$ for \eqref{eq:Pw} is given by 
\begin{equation*}
\mathcal{L}(x,\nu):= w^\T f(x) + \nu^\T h (x)
\end{equation*}
and the dual problem is 
\begin{align*}
\tag{D$_w$} \label{eq:Dw}  {\text{maximize }} g(\nu) \quad\text{ subject to } \nu \in \R^m_+,
\end{align*}
 where the dual objective function $g:\R^m\to \R \cup\{\pm \infty\}$ is defined as $g(\nu):=\inf\limits_{{x \in \R^n}}\mathcal{L}(x,\nu)$. We say that the dual problem is feasible if {the feasible region $\R^m_+ \cap \dom g$ is nonempty, that is, $\{\nu \in\R^m_+ \mid  g(\nu) > -\infty \} \neq \emptyset$.} %there exists $\nu \in \R^m_+ $ such that $g(\nu) > -\infty$. 
We know that the weak duality holds between the primal and dual problems \eqref{eq:Pw} and \eqref{eq:Dw}, that is, 
 $$p^w := \inf_{x\in \X} w^\T f(x) \geq \sup_{\nu\in\R^m_+} g(\nu) =:d^w.$$
Moreover, we say that the strong duality holds if the value of primal and dual problems are the same, that is, $p^w = d^w$. From now on, we assume the following.

\begin{assumption} \label{assm:const_qual}
 The problem \eqref{eq:P} is feasible and it satisfies a constraint qualification such that the strong duality holds for the pair of \eqref{eq:Pw} and \eqref{eq:Dw} for any $w \in C^+$. 
 \end{assumption}

This assumption is satisfied, for example, if the problem has only affine constraints, or if the (generalized) Slater's condition holds, that is, there exists $\bar{x} \in \ri \X $ such that $h(\bar{x}) < 0$. Strong duality gives us the following result.

\begin{theorem} \label{thm:1}
Suppose \Cref{assm:const_qual} holds. It holds true that
	\begin{equation*}
		W = \{w \in C^+ \mid \text{\eqref{eq:Dw} is feasible}\}.
	\end{equation*}
\end{theorem}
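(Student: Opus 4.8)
The plan is to prove the two inclusions separately, using only weak duality for one direction and invoking \Cref{assm:const_qual} (strong duality) for the other. Since both sets in the claimed identity are subsets of $C^+$, it suffices to fix an arbitrary $w \in C^+$ and show that $p^w = \inf_{x\in\X} w^\T f(x) \in \R$ holds if and only if \eqref{eq:Dw} is feasible. I would first record the easy half-observation that because \eqref{eq:P} is feasible we have $\X \neq \emptyset$, so $p^w < +\infty$ automatically; consequently, for $w \in C^+$, the membership $w \in W$ is equivalent to the single inequality $p^w > -\infty$.

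For the inclusion $W \subseteq \{w \in C^+ \mid \text{\eqref{eq:Dw} is feasible}\}$, I would take $w \in W$, so $p^w \in \R$. By \Cref{assm:const_qual}, strong duality gives $d^w = \sup_{\nu\in\R^m_+} g(\nu) = p^w \in \R$. If $g(\nu) = -\infty$ for every $\nu \in \R^m_+$, then $d^w = -\infty$, contradicting $d^w \in \R$; hence there exists $\nu \in \R^m_+$ with $g(\nu) > -\infty$, i.e.\ \eqref{eq:Dw} is feasible.

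For the reverse inclusion, I would take $w \in C^+$ such that \eqref{eq:Dw} is feasible, say $g(\bar\nu) > -\infty$ for some $\bar\nu \in \R^m_+$. Weak duality between \eqref{eq:Pw} and \eqref{eq:Dw} yields $p^w \geq d^w = \sup_{\nu\in\R^m_+} g(\nu) \geq g(\bar\nu) > -\infty$. Combining this with $p^w < +\infty$ (from feasibility of \eqref{eq:P}) gives $p^w \in \R$, so $w \in W$.

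The argument is essentially careful bookkeeping with extended-real values, so there is no serious obstacle; the only point needing attention is keeping the $+\infty$ and $-\infty$ cases separate and explicitly using feasibility of \eqref{eq:P} to rule out $p^w = +\infty$. It is worth noting in the write-up that strong duality (\Cref{assm:const_qual}) is used only for the inclusion $W \subseteq \{w \in C^+ \mid \text{\eqref{eq:Dw} is feasible}\}$, whereas the reverse inclusion requires only weak duality.
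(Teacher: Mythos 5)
Your proof is correct and follows essentially the same route as the paper: feasibility of \eqref{eq:P} rules out $p^w = +\infty$, weak duality handles one inclusion, and strong duality (Assumption~\ref{assm:const_qual}) handles the other. Your write-up is slightly more precise than the paper's in noting that strong duality is needed only for $W \subseteq \{w \in C^+ \mid \text{\eqref{eq:Dw} is feasible}\}$, while the reverse inclusion uses weak duality alone.
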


\begin{proof}
Since \eqref{eq:P} is feasible, \eqref{eq:Pw} for any $w\in C^+$ is also a feasible problem. Then, $p^w < \infty$ holds and the weak duality implies that the dual problem \eqref{eq:Dw} is not unbounded. On the other hand, the strong duality implies that \eqref{eq:Pw} is bounded if and only if \eqref{eq:Dw} is feasible. 
\end{proof}

For some classes of convex optimization problems, it is possible to write the constraints of the dual problem \eqref{eq:Dw} explicitly. In the following section, we will consider these classes, for which we will express $\P_\infty^+$ explicitly. This will provide a way to compute $\P_\infty^+$ or an approximation of it. 

Recall that the initial aim was to compute $\P_\infty$ or its outer approximation. If $\P_\infty^+$ is determined by finitely many generators, it is easy to compute (the finitely many generators of) $\P_\infty$. What if its (inner) approximation is available instead? Will a dual cone of an approximation of $\P_\infty^+$ be an approximation of $\P_\infty$? The following proposition provides an answer.

\begin{proposition}\label{prop:aprox_error}
	Let $\Z \subseteq \R^q$ be a finite $\delta$-inner approximation of $\P_\infty^+$ and $\Y$ be a finite set of generating vectors of $(\cone \Z)^+$, that is, $(\cone \Z)^+ = \cone \Y$. Then, $\Y$ is a finite $\delta$--outer approximation of $\P_\infty$.
\end{proposition}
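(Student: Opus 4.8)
The plan is to separate the two requirements of Definition~\ref{defn:approxCone} and prove the inclusion $\P_\infty \subseteq \cone \Y$ first (it is almost immediate) and then the Hausdorff estimate (the substantive part). Throughout I write $K_1 := \cone \Z$ and $K_2 := \P_\infty^+$. Both are closed convex cones: $K_1$ is finitely generated, hence polyhedral and closed, and $K_2$ is a dual cone. Moreover $K_1 \subseteq K_2$ because $\Z$ is a $\delta$-inner approximation of $\P_\infty^+$. Since $\P$ is a nonempty closed convex set, $\P_\infty$ is a closed convex cone, so the bipolar theorem yields $(\P_\infty^+)^+ = \P_\infty$. Taking dual cones in $K_1 \subseteq K_2$ reverses the inclusion, so $\cone \Y = (\cone \Z)^+ = K_1^+ \supseteq K_2^+ = (\P_\infty^+)^+ = \P_\infty$; together with the finiteness of $\Y$ this establishes the first property.

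For the error bound the key claim I would prove is that the polar operation is an isometry for the truncated Hausdorff distance $d^H(\,\cdot \cap B(0,1),\, \cdot \cap B(0,1)\,)$ on closed convex cones; precisely, for closed convex cones $K_1 \subseteq K_2$,
\[
  d^H\big(K_2 \cap B(0,1),\, K_1 \cap B(0,1)\big) \;=\; d^H\big(K_1^+ \cap B(0,1),\, K_2^+ \cap B(0,1)\big).
\]
Applying this with the $K_1,K_2$ above and using $K_1^+ = \cone \Y$ and $K_2^+ = \P_\infty$, the hypothesis $d^H(\P_\infty^+ \cap B(0,1), \cone \Z \cap B(0,1)) \le \delta$ immediately turns into $d^H(\P_\infty \cap B(0,1), \cone \Y \cap B(0,1)) \le \delta$, which is the remaining property.

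To prove the displayed identity I would first reduce each side to a one-sided supremum. Since $K_1 \subseteq K_2$, for $b \in K_2$ with $\norm{b} \le 1$ the metric projection $P_{K_1}(b)$ lies in $K_1 \cap B(0,1)$ (projection onto a convex set containing $0$ does not increase the norm), so the left-hand side equals $\sup\{\, \inf_{k \in K_1}\norm{b-k} : b \in K_2,\ \norm{b}\le 1 \,\}$, and symmetrically the right-hand side equals $\sup\{\, \inf_{k \in K_2^+}\norm{a-k} : a \in K_1^+,\ \norm{a}\le 1 \,\}$. Next, for a closed convex cone $K$ and a point $x$, Moreau's decomposition $x = P_K(x) + P_{K^\circ}(x)$ with $P_K(x) \perp P_{K^\circ}(x)$ (where $K^\circ = -K^+$) gives $\inf_{k\in K}\norm{x-k} = \norm{P_{K^\circ}(x)}$, and a short computation using self-duality of the Euclidean ball shows $\norm{P_{K^\circ}(x)} = \sup\{\, -a^\T x : a \in K^+,\ \norm{a} \le 1 \,\}$. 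Applying this with $K = K_1$ on the left and with $K = K_2^+$ (so that $K^+ = (K_2^+)^+ = K_2$ by the bipolar theorem) on the right turns both sides into the same symmetric quantity $\sup\{\, -a^\T b : a \in K_1^+,\ b \in K_2,\ \norm{a}\le 1,\ \norm{b}\le 1 \,\}$, proving the identity.

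The main obstacle is this last step: isolating and proving the two auxiliary facts --- that $\inf_{k \in K\cap B(0,1)}\norm{x-k} = \inf_{k\in K}\norm{x-k}$ when $\norm{x}\le 1$ and $0 \in K$, and that $\inf_{k\in K}\norm{x-k} = \sup\{-a^\T x : a \in K^+,\ \norm{a}\le 1\}$ --- and combining them with the bipolar theorem so that both truncated Hausdorff distances collapse to the same bilinear form. Here the choice of the $\ell_2$-norm is essential: self-duality of its unit ball is exactly what makes the bilinear form symmetric in the two cones, so that the identity comes out clean (with the $\ell_1$/$\ell_\infty$ pair one would instead carry the dual norm on one of the two factors, and the statement would have to be phrased across the two norms).
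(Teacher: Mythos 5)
Your argument is correct, but it takes a genuinely different route from the paper. The paper proves only the nontrivial one-sided bound directly: it fixes $y \in \cone \Y \cap B(0,1)$, argues by contradiction, first shows via an explicit quadratic minimization over a ray that a witness of nearness to $\P_\infty$ can be rescaled into $B(0,1)$, then separates $\{y\}+B(0,\delta)$ from $\P_\infty$ to produce a unit vector $w \in \P_\infty^+$ with $w^\T y < -\delta$, and contradicts this using a nearby $z \in \cone\Z \cap B(0,1)$ and Cauchy--Schwarz. You instead establish the stronger structural fact that polarity is an isometry for the truncated Hausdorff distance on nested closed convex cones (the Walkup--Wets phenomenon), by collapsing both truncated distances to the common bilinear quantity $\sup\{-a^\T b \mid a \in K_1^+\cap B(0,1),\ b \in K_2 \cap B(0,1)\}$ via Moreau's decomposition and the support-function formula $\inf_{k\in K}\norm{x-k} = \sup\{-a^\T x \mid a\in K^+,\ \norm{a}\le 1\}$. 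All the ingredients you isolate check out: the norm-nonincrease of the metric projection onto a convex set containing $0$ follows from the variational inequality with $k=0$ plus Cauchy--Schwarz, the distance formula is the Moreau identity, and the bipolar theorem applies since $\P_\infty$ is a closed convex cone (as $\P$ is closed and convex) and $\P_\infty^+$ is a dual cone. What your route buys is an exact equality of the two truncated Hausdorff distances (the paper only needs, and only proves, $\le \delta$ in one direction), and it makes transparent why the $\ell_2$ norm is the natural choice here --- self-duality of the unit ball is what symmetrizes the bilinear form --- which directly explains the adjustment by a factor of $2$ in the paper's remark on the $\ell_1/\ell_\infty$ pairing. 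What the paper's route buys is a short, self-contained argument using only separation and Cauchy--Schwarz, with no appeal to Moreau decomposition or support functions.
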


\begin{proof} 
Since $\cone \Z \subseteq \P_\infty^+$, we have $\cone \Y = (\cone \Z)^+ \supseteq \P_\infty$. Let $y \in \cone \Y \cap B(0,1)$ and assume for contradiction that $(\{y\} + B(0,\delta)) \cap (\P_\infty \cap B(0,1)) = \emptyset$. 

First, we show that $(\{y\} + B(0,\delta)) \cap (\P_\infty \cap B(0,1)) = \emptyset$ implies $(\{y\} + B(0,\delta)) \cap \P_\infty = \emptyset$. Assume, also by contradiction, that this is not the case, therefore there exists $b \in B(0,\delta)$ such that $y + b \in \P_\infty \setminus B(0,1)$. Since $\P_\infty$ is a cone, it holds $\lambda (y + b) \in \P_\infty$ for all $\lambda \geq 0$. Consider the convex quadratic optimization problem \begin{equation}\label{eq:quad_pr} 
	\min_{\lambda \geq 0} \Vert \lambda (y + b) - y \Vert^2 = \min_{\lambda \geq 0} \{\lambda^2 \Vert y + b \Vert^2 - 2\lambda y^\T (y+b) + \Vert y \Vert^2\}.
	\end{equation} 
%\gk{Have we proven $y^\T (y+b) > 0$? If not we have to consider both cases or use Weierstrass argument as below for the $\ell_\infty$ case.}
{
The quadratic problem~\eqref{eq:quad_pr} is solved by $\lambda^* = \frac{y^\T (y+b)}{\Vert y + b \Vert^2}$ if $y^\T(y+b) \geq 0$ and by $\lambda = 0$ otherwise. First, consider the case of $\lambda^* < 0$, where we have $\norm{y} \leq \norm{b} \leq \delta$ by monotonicity of the quadratic objective function for $\lambda \in [\lambda^*,  \infty)$. Therefore, vector $0 \in (\{y\} + B(0,\delta)) \cap (\P_\infty \cap B(0,1))$ provides the desired contradiction for the case of $\lambda^*<0$. Second, consider the case $\lambda^* \geq 0$, where the Cauchy-Schwarz inequality yields $\lambda^* (y+b) \in B(0,1)$ as}	

%\out{	The first order condition yields the optimal value }
\begin{align*}
\lambda^* = \frac{y^\T (y+b)}{\Vert y + b \Vert^2} \leq \frac{\Vert y \Vert \Vert y + b \Vert}{\Vert y + b \Vert^2} \leq \frac{1}{\Vert y + b \Vert}.
\end{align*} 
Since $\lambda^*$ is an optimal solution of \eqref{eq:quad_pr}, we also obtain $\Vert \lambda^* (y + b) - y \Vert \leq \Vert  (y + b) - y \Vert = \Vert b \Vert \leq \delta$. Therefore, vector $\lambda^* (y+b)  \in (\{y\} + B(0,\delta)) \cap (\P_\infty \cap B(0,1))$ provides the desired contradiction for the case of $\lambda^* \geq 0$, and the implication is proven.
%Therefore, it also holds $\lambda^* (y+b) \in B(0,1)$.  Finally,  since $\lambda^*$ is an optimal solution of \eqref{eq:quad_pr}, we also obtain $\Vert \lambda^* (y + b) - y \Vert \leq \Vert  (y + b) - y \Vert = \Vert b \Vert \leq \delta$, which shows that $\lambda^* (y+b) \in \{y\} + B(0, \delta)$. This provides a contradiction $\lambda^* (y+b)  \in (\{y\} + B(0,\delta)) \cap (\P_\infty \cap B(0,1))$, which proves the implication.

Second, we use $(\{y\} + B(0,\delta)) \cap \P_\infty = \emptyset$ to show that the initial assumption cannot hold. By separation arguments, there exists $w \in \R^q \setminus \{0\}$ such that $w^\T(y - b)< w^\T p$ for all $b \in B(0,\delta), p\in \P_\infty$. In particular, $w \in \P_\infty^+$ and $w^\T(y - b)< 0$ for all $b \in B(0,\delta)$. Without loss of generality, we may assume $\norm{w} = 1$. The choice of $\bar{b} = -\delta w$ shows that it holds $w^\T y < w^\T \bar{b} = -\delta$.
 On the other hand, since $w \in \P_\infty^+ \cap B(0,1)$, there exists $z \in \cone \Z \cap B(0,1)$ such that $\norm{w-z} \leq \delta.$ Since $z \in \cone \Z, y\in \cone \Y$, we have $y^\T z  \geq 0. $ Then, using the Cauchy-Schwarz inequality, we obtain 
\begin{align*}
0 \leq y^\T z = y^\T(z-w) + y^\T w \leq \norm{y} \norm{z-w} + y^\T w < 0, 
\end{align*}
which is a contradiction.
\end{proof}

{Let us now address the issue of the norm used. The above proposition holds for the (self-dual) $\ell_2$ norm. Do we get a similar result for other (dual pairs of) norms? For computational purposes, the pair of $\ell_1$ and $\ell_\infty$ norms with polyhedral unit balls are particularly important. The following remark shows that, in the general case, the tolerance is increased, but by less than a factor of two.}

\begin{remark}\label{rem_1_infty}
{Let $p, r \in [1, \infty]$ satisfy $\frac{1}{p} + \frac{1}{r} = 1$ and consider the dual pair of $\ell_p$ and $\ell_{r}$ norms alongside appropriately adapted Definition~\ref{defn:approxCone} of approximation of a cone. The following can be shown: \textit{If $\Z \subseteq \R^q$ is a finite $\delta$-inner approximation of $\P_\infty^+$ in $\ell_p$ and $\Y$ is a finite set of generating vectors of $(\cone \Z)^+$, then $\Y$ is a finite $\frac{2\delta}{1+\delta}$--outer approximation of $\P_\infty$ in $\ell_{r}$.}

We sketch the proof of this claim. Let $B_p$ and $B_{r}$ denote the closed balls with respect to the $\ell_p$ and $\ell_{r}$ norms. Let $y \in \cone \Y \cap B_{r}(0,1)$. First, we prove by contradiction that $(\{y\} + B_{r}(0, \frac{2\delta}{1+\delta}))  \cap (\P_\infty \cap B_{r}(0,1)) = \emptyset$ implies $(\{y\} + B_{r}(0,\delta)) \cap \P_\infty = \emptyset$: Assume that $y + b \in \P_\infty \setminus B_{r}(0,1)$ for some $b \in B_{r}(0,\delta)$ and consider the convex optimization problem 
\begin{equation}\label{eq:quad_pr_1} 
	\min_{\lambda \geq 0} \norm{ \lambda (y + b) - y }_{r} .
\end{equation} 
Since a coercive function attains a minimum over a closed set, there exists an optimal solution $\lambda^*$ satisfying $\norm{\lambda^* (y+b) - y}_{r} \leq  \norm{b}_{r} \leq \delta$. Hence, $\norm{\lambda^* (y+b) }_{r}  \leq \norm{y}_{r} + \norm{\lambda^* (y+b) - y}_{r}  \leq 1+ \delta$.  The point $\frac{1}{1+\delta}\lambda^* (y+b)  \in \P_\infty \cap B_{r}(0,1)$ provides the desired contradiction since $\frac{1}{1+\delta}\lambda^* (y+b)  \in \{y\} + B_{r}(0, \frac{2\delta}{1+\delta})$ follows from
\begin{align*}
\norm{\frac{1}{1+\delta}\lambda^* (y+b) - y}_{r} \leq \frac{1}{1+\delta}  \norm{\lambda^* (y+b) - y}_{r} + \frac{\delta}{1+\delta}  \norm{y}_{r} \leq \frac{\delta}{1+\delta} + \frac{\delta}{1+\delta}.
\end{align*}

Second, we show that $(\{y\} + B_{{r}}(0,\delta)) \cap \P_\infty = \emptyset$ leads to a contradiction: By a separation argument, there exists $w \in \P_\infty^+ \setminus \{0\}$ such that
$w^\T(y+b) < 0$ for all $b \in B_{{r}}(0, \delta)$ and, therefore, $w^\T y < -\delta$. Since we can without loss of generality assume $w \in P_\infty^+ \cap B_p (0, 1)$, there must exist $z \in \cone \Z \cap B_p (0,1)$ such that $\norm{w - z}_p \leq \delta$. Since $(\cone \Z)^+ = \cone \Y$, we get a contradiction
\begin{align*}
0 \leq z^\T y \leq y^\T (z-w) + y^\T w \leq \norm{y}_{r} \norm{z-w}_p  + y^\T w < 1 \cdot \delta - \delta = 0.
\end{align*}
}

{\begin{remark}
In \cite{walkup1967continuity}, a slightly different Hausdorff distance for closed convex cones $K_1,K_2 \subseteq \R^q$ is defined as $$d^W(K_1,K_2):=\max\{\sup_{k_1 \in K_1 \cap B_p(0,1)} \inf_{k_2\in K_2} \norm{k_1-k_2}_p, \sup_{k_2 \in K_2 \cap B_p(0,1)} \inf_{k_1\in K_1} \norm{k_1-k_2}_p\},$$ where $p \in [1, \infty]$. If this Hausdorff distance is used to define $\delta$-inner and -outer approximations of cones in \Cref{defn:approxCone}, then \Cref{prop:aprox_error} holds, that is, the approximation tolerance is preserved, for any dual pairs of norms by \cite[Theorem 1]{walkup1967continuity}. However, this result cannot be applied to our case since the two distance measures do not coincide in general. To see this, consider $K_1 = \cone \{(0.2, 0.8)^\T\}$ and $K_2 = \cone\{(0.4, 0.6)^\T\} \subseteq \R^2$. If we use the $\ell_1$ norm in both measures, we obtain $d^W (K_1,K_2) = \frac13 $, while $d^H(K_1\cap B_1(0,1),K_2\cap B_1(0,1)) = 0.4$. 
	\end{remark}}
\end{remark}

\Cref{prop:aprox_error} suggests that by computing a $\delta$-inner approximation of $\P^+_\infty$, we can generate a $\delta$-outer approximation of $\P_\infty$, which can be used to compute a finite $(\epsilon,\delta)$-solution to problem \eqref{eq:P}. Note that it is sufficient to consider the set $W$ since this set corresponds, up to the closure, to the cone $\P_\infty^+$ of interest by Proposition~\ref{prop:weightset}. What about the closure? Set $W$ can be computed exactly if it is polyhedral (so closed). Otherwise, it needs to be approximated, in which case an approximation of $W$ is also an approximation of its closure.

From a practical point of view, instead of computing or approximating the cone $W$, we will compute or approximate the bounded convex set
\begin{equation} \label{eq:Wc}
	W_c := W \cap \{ w \in \mathbb{R}^d \mid w^\T c \leq 1\}
\end{equation}
for a fixed $c \in \Int C$ with $\norm{c} = 1$. The next proposition shows that the set $W$ can be approximated {through} an approximation of $W_c$.
	\begin{proposition} \label{prop:Wc}
		Let $W_c$ be as defined in \eqref{eq:Wc} for some $c \in \Int C$ with $\norm{c} = 1$ {and let $\delta \in (0,1)$ be a tolerance}. Assume that $\bar{W}$ is a finite $\delta$-inner approximation of $W_c$ in the sense that it holds 
		 \begin{equation*} \label{eq:projection} 
		 	\bar{W} \subseteq W_c\quad \text{and} \quad d^H (W_c, \conv \bar{W})\leq \delta.	
		 \end{equation*} 
		 Then, $\bar{W}$ is also a finite $\delta$-inner approximation of the cone $W$.%, that is, $d^H (W \cap B(0,1), \cone \bar{W} \cap B(0,1)) \leq \delta.$
\end{proposition}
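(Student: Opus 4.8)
The plan is to verify directly the two conditions that make $\bar W$ a finite $\delta$--inner approximation of the cone $W$ in the sense of Definition~\ref{defn:approxCone}, namely $\cone\bar W\subseteq W$ and $d^H\!\left(W\cap B(0,1),\,\cone\bar W\cap B(0,1)\right)\le\delta$. The first is essentially immediate: since $\bar W\subseteq W_c\subseteq W$ and $W$ is a convex cone (positive homogeneity and concavity of $w\mapsto\inf_{x\in\X}w^\T f(x)$, together with feasibility), we get $\cone\bar W\subseteq W$. This inclusion also kills one of the two suprema in the Hausdorff distance, because $\cone\bar W\cap B(0,1)\subseteq W\cap B(0,1)$; so the whole task reduces to showing that every $w\in W\cap B(0,1)$ lies within $\delta$ of some $z\in\cone\bar W\cap B(0,1)$.

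For this I would first pass to a normalized representative. Fix $w\in W\cap B(0,1)$; the case $w=0$ is trivial since $0\in\cone\bar W\cap B(0,1)$, so assume $w\ne 0$. From $c\in\Int C$ and $w\in C^+\setminus\{0\}$ one gets $w^\T c>0$, while Cauchy--Schwarz with $\norm{c}=1$ and $\norm{w}\le1$ gives $w^\T c\le1$. Hence $w':=w/(w^\T c)$ satisfies $w'\in W$ (again $W$ is a cone) and $(w')^{\T} c=1$, so $w'\in W_c$. The inner approximation hypothesis $d^H(W_c,\conv\bar W)\le\delta$ then yields some $\bar w\in\conv\bar W$ with $\norm{w'-\bar w}\le\delta$ (the infimum is attained since $\conv\bar W$ is compact). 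Scaling back by $w^\T c\in(0,1]$ gives $z:=(w^\T c)\bar w\in\cone\bar W$ with $\norm{w-z}=(w^\T c)\norm{w'-\bar w}\le\delta$.

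The one point that needs care is that this $z$ need not lie in $B(0,1)$ — a priori only $\norm{z}\le\norm{w}+(w^\T c)\delta\le1+\delta$ is clear — so the last step is to move $z$ back into the unit ball without increasing its distance to $w$. I would do this exactly as in the proof of Proposition~\ref{prop:aprox_error}: replace $z$ by its metric projection $z'$ onto the convex set $B(0,1)$, which has the form $z'=\mu z$ for some $\mu\in[0,1]$ (so $z'\in\cone\bar W\cap B(0,1)$, $\cone\bar W$ being a cone), and observe that since the metric projection onto a convex set is nonexpansive and $w\in B(0,1)$ is its own projection, $\norm{w-z'}\le\norm{w-z}\le\delta$. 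Taking the supremum over $w\in W\cap B(0,1)$ then finishes the Hausdorff bound. I do not expect a genuine obstacle here; the only things to get right are the bookkeeping of the factor $w^\T c$ — which is precisely where the normalizations $\norm{c}=1$, $\norm{w}\le1$ are used, so that rescaling preserves membership in $W_c$ and does not inflate distances — and this final projection back onto $B(0,1)$.
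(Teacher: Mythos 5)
Your proof is correct and follows essentially the same route as the paper: reduce to the Hausdorff bound for $W_c$ and then radially rescale the approximant back into the unit ball. The only differences are cosmetic --- the paper skips your normalization $w\mapsto w/(w^\T c)$ by noting directly that Cauchy--Schwarz gives $W\cap B(0,1)\subseteq W_c$, and it pulls the approximant into $B(0,1)$ via the explicit orthogonal projection of $w$ onto the ray through $\bar w$ rather than via nonexpansiveness of the metric projection onto the ball; both devices work equally well.
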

\begin{proof}
Consider an element $w \in W \cap B(0,1)$. Note that the Cauchy-Schwarz inequality implies $W \cap B(0,1) \subseteq W_c$, therefore, there exists $\bar{w} \in \conv \bar{W}$ with $\norm{w - \bar{w}} \leq \delta$. Our proof would be finished if $\norm{\bar{w}} \leq 1$. We proceed with the case $\norm{\bar{w}} > 1$ where we show that the orthogonal projection $\frac{w^\T \bar{w}}{\bar{w}^\T \bar{w}} \bar{w}$ provides the desired bound:
Firstly, since $w^\T \bar{w} = \frac{1}{2} \left( \norm{w}^2 + \norm{\bar{w}}^2 - \norm{w - \bar{w}}^2 \right) > {\frac{1}{2}} (1 - \delta^2) > 0$ we know that $\frac{w^\T \bar{w}}{\bar{w}^\T \bar{w}} \bar{w} \in \cone \bar{W}$.
Secondly, $\frac{w^\T \bar{w}}{\bar{w}^\T \bar{w}} \bar{w} \in B(0,1)$ holds since $\norm{ \frac{w^\T \bar{w}}{\bar{w}^\T \bar{w}} \bar{w} } = \frac{\vert w^\T \bar{w}\vert }{\norm{\bar{w}}} \leq \frac{\norm{w} \norm{\bar{w}} }{\norm{\bar{w}}} \leq 1$.
And thirdly, for $\frac{w^\T \bar{w}}{\bar{w}^\T \bar{w}} = \argmin\limits_{\alpha \in \mathbb{R}} \norm{w - \alpha \bar{w}}$ it holds $\norm{ w - \frac{w^\T \bar{w}}{\bar{w}^\T \bar{w}} \bar{w}} \leq \norm{w - \bar{w}} \leq \delta$, which proves the claim.
\end{proof}

{In light of \Cref{rem_1_infty}, let us again address different norms in the context of \Cref{prop:Wc}. Keep in mind that the $\ell_1$ and $\ell_\infty$ norms are relevant for computational purposes.}
\begin{remark}
{
Let $p, {r} \in [1, \infty]$ satisfy $\frac{1}{p} + \frac{1}{{r}} = 1$ and use $c \in \Int C$ with $\norm{c}_{{r}} = 1$ to define the set $W_c$. The following can be shown: \textit{If $\bar{W}$ is a finite $\delta$-inner approximation of $W_c$ in $\ell_p$, then $\bar{W}$ is a finite $\frac{2\delta}{1+\delta}$--inner approximation of the cone $W$ in $\ell_p$.}

We sketch the proof again. Let $w \in W \cap B_p (0,1)$. Since the Cauchy-Schwarz theorem implies $w \in W_c$, there exists $\bar{w} \in \conv \bar{W}$ satisfying $\norm{w - \bar{w}}_p \leq \delta$. Consider the convex optimization problem
\begin{align}
\label{eq_prob_remark2}
\min\limits_{\alpha \geq 0} \norm{w - \alpha \bar{w}}_p.
\end{align}
Since a coercive function attains its minimum over a closed set, there exists an optimal solution $\alpha^*$ satisfying $\norm{w - \alpha^* \bar{w}}_p \leq \norm{w - \bar{w}}_p \leq \delta$ and $\norm{\alpha^* \bar{w}}_p  \leq \norm{w}_p + \norm{\alpha^* \bar{w} - w}_p \leq 1 + \delta$. The claim follows from $\frac{1}{1+\delta} \alpha^* \bar{w} \in \cone \bar{W} \cap B_p (0,1)$ satisfying 
\begin{align*}
\norm{w - \frac{1}{1+\delta} \alpha^* \bar{w}}_p \leq \frac{\delta}{1+\delta} \norm{w}_p +  \frac{1}{1+\delta} \norm{ w- \alpha^* \bar{w}}_p \leq \frac{\delta}{1+\delta} + \frac{\delta}{1+\delta}.
\end{align*}
}

\end{remark}

A cone is determined by its base, such as $W \cap \{ w \in \mathbb{R}^d \mid w^\T c = 1\}$. However, a full-dimensional set $W_c$ is preferable for computational purposes. Alternatively, one could aim to replace the base with a $(q-1)$-dimensional set generating it. Assume without loss of generality that $c_q \neq 0$. Let $c_{-q}\in \R^{q-1}$ denote the first $q-1$ components of $c$ and $w :\R^{q-1} \to \R^q$ be defined as 
$$w (\lambda):= (\lambda^\T, \frac{1-\lambda^\T c_{-q}}{c_q})^\T$$
so that $c^\T w(\lambda) = 1$ holds for all $\lambda \in \R^{q-1}$. Then for the bounded set
\begin{equation}\label{eq:weightset}
	\Lambda:= \{\lambda \in \R^{q-1} \mid w(\lambda)\in C^+, \ \text{\eqref{eq:Dw} is feasible for } w = w(\lambda)\},
\end{equation} 
we have $ W = \cone\{w(\lambda)\in \R^q \mid \lambda \in \Lambda\}$ by construction. 

In particular, for the $q=2$ case, the set $\Lambda$ is a bounded interval and it suffices to solve two scalar problems to compute the bounds 
\begin{equation*}
	\inf\{\lambda \in \R \mid w(\lambda)\in C^+, \ \text{\eqref{eq:Dw} is feasible for } w = w(\lambda)\}
\end{equation*} and 
\begin{equation*}
	\sup\{\lambda \in \R \mid w(\lambda)\in C^+, \ \text{\eqref{eq:Dw} is feasible for } w = w(\lambda)\}.
\end{equation*}

The drawback of considering the $(q-1)$-dimensional set $\Lambda$ arises if the set cannot be computed exactly, but has to be approximated: Approximation error for the set $\Lambda$ is not preserved for the cone $W$ and bound on the tolerance depends on the particular choice of vector $c$. Nevertheless, we consider the approach through the set $\Lambda$ useful at least in two cases: (1) If the set $\Lambda$ can be computed exactly. (2) In the $q=2$ case when the interval $\Lambda$ is approximated through two scalar problems, since solvers for scalar problems can in practice usually handle significantly lower precision than multi-objective problems or algorithms for projection problems.

\section{Computations for special cases} \label{sect:ex}

{The solution approach presented in \Cref{sect:recCone} is applicable for the problems} where Assumption~\ref{assm:const_qual} holds and the set 
\begin{align*}
%\label{set_weights}
W =  \{w \in C^+ \mid \text{\eqref{eq:Dw} is feasible}\}
\end{align*}
can be expressed explicitly through the constraints of the dual problem. {In this section, we will discuss three cases for which we can write the dual problem, hence the set $W$, explicitly.}

 We start with a relatively wide class of semidefinite problems{, which is well-studied for the single objective case and has many application areas, see for instance the review paper \cite{BV96}. There are also some studies that consider the class of semidefinite vector optimization problems in the literature, see \cite{EichJahn2008,EichJahn2010,wanka2003multiobjective}. Similar to the single objective case,} the arguments of~\cite{BV96} can be straightforwardly extended to show that linear vector optimization and quadratic convex vector optimization problems with polyhedral ordering cones are special cases of semidefinite vector problems. Nevertheless, we also address linear and quadratic problems individually and provide further observations. 

For the problems we consider below, the set of weights $W$, and consequently also the sets $W_c$ of~\eqref{eq:Wc} and $\Lambda$ of~\eqref{eq:weightset}, have a form of a convex (or polyhedral) projection. Methods for solving convex (or polyhedral) projections can, therefore, be used to approximate (or compute) the set $W_c$ (or the set $\Lambda$). In the light of \Cref{prop:Wc}, we obtain an approximation of~$\mathcal{P}_\infty^+$. Finally, a dual cone of this approximation provides the desired approximation of the recession cone $\mathcal{P}_\infty$ of the upper images as \Cref{prop:aprox_error} shows.

An outer approximation of the recession cone is needed to solve a CVOP in the sense of \Cref{defn:solutionconcept}. The method proposed in this paper can be used to replace the first phase of the algorithm proposed in~\cite{WURKH22}. Keep in mind that if the problem is self-bounded, then the recession cone itself can also be used to solve the problem. If this is not the case, however, an outer approximation of it is needed even if it is possible to compute $\mathcal{P}_\infty$ exactly. In the light of \Cref{prop:weightset}, unless the set $W$ is known to be closed, we need to look for its inner approximation.

\subsection{Semidefinite problems}\label{subsect:semid}
The first class of problems we consider are the semidefinite problems. In the following, $S^k$ denotes the set of symmetric $k \times k$ matrices and $S^k_+$ denotes the set of symmetric, positive semidefinite $k \times k$ matrices.
Consider a semidefinite vector program in inequality form,
\begin{align*}\tag{SDVP} \label{SDVP}
	\text{minimize } & \quad P^\T x \quad \text{ with respect to\ } \leq_C \\ \text{ subject to } & \quad x_1 F_1 + \ldots + x_n F_n + G \preceq 0, 
\end{align*}
for some $P \in \R^{n\times q}, F_1,\ldots,F_n, G \in S^k, k\geq 2$. The weighted sum scalarization for a weight $w\in C^+$ is the scalar semidefinite program
\begin{align*}
\text{minimize } & \quad w^\T P^\T x   \\ \text{ subject to } & \quad x_1 F_1 + \ldots + x_n F_n + G \preceq 0
\end{align*}
and its Lagrange dual is 
\begin{align*}
\text{maximize } & \quad \tr (GZ)   \\ \text{ subject to } & \quad \tr (F_i Z) + e_i^T Pw = 0, \ i\in\{1,\ldots,n\}, \\
& \quad Z \succeq 0.
\end{align*}
We refer a reader interested in the derivation of the dual problem to \cite[Example 5.11]{Boyd}. 

Assumption~\ref{assm:const_qual} on constraint qualification is satisfied if there exists $x\in \R^n$ such that $x_1 F_1 + \ldots + x_n F_n + G \prec 0$, consider \cite[Equation 5.27]{Boyd}. Then the strong duality yields the set $W$ of the convex projection form
%$$\P_\infty^+ = \cl \{w \in C^+ \mid \exists Z \succeq 0 : \ \tr (F_i Z) + e_i^T Pw = 0, \; i = 1,\ldots,n \}.$$
$$W = \{w \in C^+ \mid \exists Z \succeq 0 : \ \tr (F_i Z) + e_i^T Pw = 0, \; i = 1,\ldots,n \},$$ {which can be computed by the method presented in \cite{KovacovaRudloff2021}.}

{In the following subsections, we consider two special cases of semidefinite problems for which further simplification and/or observation can be made.}
 
\subsection{Linear problems} \label{subsect:linear}
{The class of linear vector optimization problems is the most studied vector optimization problem class; many solution approaches are available and some of them are already mentioned in \Cref{sect:intro}. Many existing methods are designed to solve bounded problems. Possibly unbounded LVOPs are considered for instance in \cite{Loehne11,psimplex}. It has been shown that the recession cone can be computed via the homogeneous problem, which is again a $q$-dimensional LVOP, see \cite[Section 4.6]{Loehne11}. The parametric simplex method from \cite{psimplex} is a decision space algorithm and provides the recession directions of the upper image at the final stage of the algorithm together with its vertices.} 
	
{In this section, we show that to compute the recession directions of an LVOP, it is sufficient to solve a polyhedral projection problem where the dimension of the problem can be decreased from $q$ to $q-1$. Note that the proposed method is an alternative to computing the recession cone via the homogeneous problem from \cite{Loehne11} as both methods form the first phase of the solution approach considered in this paper.}

Given matrices $P\in \R^{n\times q}, A \in \R^{m\times n}$, a vector $b\in \R^m$, and a polyhedral ordering cone $C$, {we} consider the linear vector optimization problem
\begin{align*}
\tag{LVP} \label{eq:Plin}  
\text{minimize } P^\T x \quad \text{ with respect to\ } \leq_C\quad\text{ subject to } A x \leq b.
\end{align*}
For a weight vector $w \in C^+$, the Lagrange dual \eqref{eq:Dw} of the weighted sum scalarization problem \eqref{eq:Pw} is given by
\begin{align*}
%\label{eq:Dwlin}  
\text{maximize } -b^\T y \quad \text{ subject to } \quad A^\T y = -P w, \quad y\geq 0.
\end{align*}
Applying \Cref{prop:weightset} and \Cref{thm:1}, we obtain
\begin{align}
\label{eq_lin1}
\P_\infty^+ = W = \{w \in C^+ \mid \exists y \geq 0 \ :  -P w = A^\T y\}.
\end{align}
The problem of computing the set~\eqref{eq_lin1} is a polyhedral projection problem. Closure is not needed on the right-hand side of~\eqref{eq_lin1} since the set is a polyhedron. The polyhedral dual cone $\P_\infty^+$ can be computed exactly, rather than approximated, which is appropriate since the linear problem is self-bounded per \Cref{prop:weightset}. %\out{Polyhedral projection is an alternative to computing the recession cone via the homogeneous problem, see \cite[Section 4.6]{Loehne11}.}

As we suggested in {\Cref{sect:recCone}}, instead of computing the cone~\eqref{eq_lin1} in $\R^q$, we can compute the $(q-1)$-dimensional set 
	\begin{align*}
		%\label{eq_lin2}
		\Lambda = \{\lambda \in \R^{q-1} \mid w(\lambda) \in C^+, \exists y \geq 0 \ :  -P w(\lambda) = A^\T y\},
	\end{align*}
which also corresponds to solving a polyhedral projection problem. Moreover, we know that $\Lambda$ is a closed interval if $q=2$. In this case, {to obtain the bounds of this interval,} it suffices to solve {the following} two scalar linear problems
\begin{align*}
	\text{minimize/maximize } & \quad \lambda \\
	\text{ subject to } & \quad w(\lambda) \in C^+, \\
	& \quad -P w(\lambda)= A^\T y,\\
	& \quad \lambda \in \R, y \geq 0.
\end{align*}

\subsection{Convex quadratic problems}\label{subsect:quad}
%\out{Quadratic problems are the third class that we consider.} 
{The last special case that we consider is the class of convex quadratic problems, which is a well-established area of mathematical programming in the scalar case. There are also recent papers that consider this class of problems in the multiobjective setting in different contexts, see \cite{chuong2022robust,eichfelder2022note,jayasekara2023solving,oberdieck2016multi}. In this section, we show that if the convex quadratic vector optimization problem}
%\out{We will see that if the problem} 
contains at least one quadratic constraint, then the problem is bounded.
Moreover, below we identify several conditions under which it holds $\mathcal{P}_\infty^+ = C^+$.

We consider the following convex quadratic vector optimization problem
\begin{align*}\tag{QVP} \label{QVP}
\text{minimize } & \quad f(x) \quad \text{ with respect to\ } \leq_C \\ \text{ subject to } & \quad x^\T Q_j x + c_j^\T x + r_j \leq 0, \quad j\in \{1,\ldots,p \}, \\
& \quad A x \leq b, 
\end{align*}
where $Q_j \in S^{n}_+ \setminus \{0\}, c_j \in \R^n, r_j \in \R$ for $j\in\{1,\ldots,p\}$, $A \in \R^{m\times n}, b\in \R^m$, and the $C$-convex objective function $f = (f_1, \ldots,f_q)^\T:\R^n \to \R^q$ is given by $f_i(x) = x^\T P_i x + d_i^\T x$ with $P_i\in S^{n}, d_i \in \R^n$ for $i = 1,\ldots,q.$ Note that $f$ is $C$-convex if and only if for all $w\in C^+$ is $w^\T f$ convex, or equivalently $\sum_{i=1}^q w_i P_i \succeq 0$. In particular, for $C \supseteq \R^q_+$ a convexity of each objective $f_1, \dots, f_q$ implies $C$-convexity of $f$. For $C=\R^q_+$, the converse also holds. 

Now let us look at what we can learn about the problem.
The weighted sum scalarization for a weight vector $w \in C^+$
\begin{align*}
%\label{eq:Dwlin}  
\text{minimize } & x^\T \left(\sum_{i=1}^q w_i P_i\right) x + \left(\sum_{i=1}^q w_i d_i \right)^\T x \\ 
\text{ subject to } & \quad x^\T Q_j x + c_j^\T x + r_j \leq 0, \quad j\in \{1,\ldots,p \}, \\
& \quad A x \leq b 
\end{align*}
yields a dual function
\begin{align*}
g(\nu,\mu) =  &\inf_{x \in \R^n} \left(x^\T \left(\sum_{i=1}^q w_i P_i + \sum_{j=1}^p \nu_j Q_j \right) x + \left(\sum_{i=1}^q w_i d_i + \sum_{j=1}^p \nu_j c_j + A^\T \mu \right)^\T x\right) \\ &+ \nu^\T r -\mu^\T b. 
\end{align*}
Keeping \Cref{thm:1} in mind, we are interested in the weights $w$ for which the dual problem is feasible.
Given the infimum term in the dual function, we have feasibility in two cases: if the quadratic expression in $x$ is convex, or if the quadratic expression in $x$ is constant. This yields the following form of set $W$,
\begin{align} \label{eq:PinftyQCQP}
&W = \left\lbrace w \in C^+ \mid \exists \nu \in \R^p_+ : 0 \neq \sum_{i=1}^q w_i P_i + \sum_{j=1}^p \nu_j Q_j \succeq 0 \right\rbrace \  \cup \\ 
& \left\lbrace w \in C^+ \mid \exists \nu \in \R^p_+, \mu \in \R^m_+ : \ \sum_{i=1}^q w_i P_i + \sum_{j=1}^p \nu_j Q_j = 0,  \sum_{i=1}^q w_i d_i + \sum_{j=1}^p \nu_j c_j + A^\T \mu = 0\right\rbrace . \notag
\end{align}

Using the structure of $W$ given by \eqref{eq:PinftyQCQP}, we show in the following two propositions that either the set $W$ itself or its closure is equal to $C^+$ in some standard cases.
\begin{proposition} \label{prop:quad1}
 	Consider problem \eqref{QVP}. In each of the following cases, $W = \P_\infty^+ = C^+$ holds, in particular, the problem is bounded. 
 	\begin{enumerate}[(a)]
 		\item There is at least one nonlinear constraint, that is, $p>0$.
 		\item $P_1,\ldots,P_q \in S^n$ are linearly independent. 
 	\end{enumerate}
 	\end{proposition}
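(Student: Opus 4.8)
The plan is to use the explicit description of $W$ in \eqref{eq:PinftyQCQP} and show that in each of the two cases every $w\in C^+$ lands in (at least) the first of the two sets in the union, since the inclusion $W\subseteq C^+$ is built into the definition and, once $W=C^+$, \Cref{prop:weightset} gives $\P_\infty^+=\cl W=C^+$, whence $\P_\infty=(C^+)^+=\cl C=C$ (recall $C$ is closed here, or pass to $\cl C$), so the problem is bounded by the remark following the boundedness definition. So the whole content is the inclusion $C^+\subseteq W$.

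For case (a), fix $w\in C^+$. Since $f$ is $C$-convex we have $M_0:=\sum_{i=1}^q w_iP_i\succeq 0$. Pick any nonlinear constraint index, say $j=1$, so $Q_1\in S^n_+\setminus\{0\}$. Then for any $\nu_1>0$ and $\nu_2=\dots=\nu_p=0$ the matrix $M_0+\nu_1 Q_1$ is a sum of a positive semidefinite matrix and a nonzero positive semidefinite matrix, hence is positive semidefinite and nonzero (its trace is $\ge \nu_1\tr Q_1>0$). Therefore $w$ lies in the first set of the union in \eqref{eq:PinftyQCQP}, and $w\in W$. This step is essentially immediate.

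For case (b), again fix $w\in C^+$ and set $M_0:=\sum_{i=1}^q w_iP_i\succeq 0$ (from $C$-convexity); there are no nonlinear constraints available, so we must show $M_0\neq 0$, i.e. $w$ is already in the first set of the union (with $\nu=0$). Suppose $M_0=\sum_{i=1}^q w_iP_i=0$. The hypothesis that $P_1,\dots,P_q$ are linearly independent in $S^n$ forces $w=0$; but $w\neq 0$ since $W$ — equivalently $C^+$ — is being tested on nonzero vectors (the recession-cone/weight-set identities concern $C^+\setminus\{0\}$, and $0$ trivially contributes nothing). Wait — I should be a little careful: the definition of $W$ as written allows $w=0$, for which $M_0=0$ and $w$ falls in the \emph{second} set (take $\nu=0,\mu=0$), so $0\in W$ anyway. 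Hence for every $w\in C^+$, either $w=0\in W$ trivially, or $w\neq 0$ and then $M_0\neq 0$ by linear independence, so $w$ is in the first set of the union; either way $C^+\subseteq W$, and combined with $W\subseteq C^+$ we get $W=C^+$.

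The only genuine subtlety — and the place to be precise rather than hard — is the reduction at the start: from $W=C^+$ one must cite \Cref{prop:weightset} to pass to $\P_\infty^+=C^+$ and then dualize using $\big(C^+\big)^+=\cl C$ together with $C$ closed (or note $\P_\infty$ is closed by construction of $\P$, so $\P_\infty=\cl C$) to conclude boundedness via the observation "$\P$ bounded $\iff \P_\infty=\cl C$" recorded after the boundedness definition; also one should note $C^+\neq\{0\}$ (true since $C$ is pointed with nonempty interior), so the "$\{0\}\neq\P_\infty^+=W$" branch of \Cref{prop:weightset} additionally certifies self-boundedness, which is consistent. No step requires a delicate estimate; the work is just assembling the explicit form \eqref{eq:PinftyQCQP}, the $C$-convexity characterization $\sum w_iP_i\succeq0$, and the two one-line arguments above.
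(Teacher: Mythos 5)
Your proposal is correct and follows essentially the same route as the paper: in both cases you verify $C^+\subseteq W$ directly from the explicit form \eqref{eq:PinftyQCQP} (in (a) via a nonzero multiplier on a nonzero $Q_j$, in (b) via linear independence forcing $\sum_i w_iP_i\neq 0$ for $w\neq 0$ and handling $w=0$ through the second set), and then invoke \Cref{prop:weightset} to pass from $W=C^+$ to $\P_\infty^+=C^+$ and boundedness. The only cosmetic difference is that in (a) you use a single choice $\nu_1>0$ with a trace argument where the paper splits into the cases $\sum_i w_iP_i\neq 0$ and $\sum_i w_iP_i=0$.
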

 \begin{proof} For each case, we will show $W=C^+$. This implies $W = \P_\infty^+$ and by \Cref{prop:weightset}, the problem is self-bounded. Indeed, it is bounded as we also have $\P_\infty^+ = C^+$.
 	\begin{enumerate}[(a)]
 	\item By convexity, we have $Q_1, \dots, Q_p \succeq 0$ and $\sum_{i=1}^q w_i P_i \succeq 0$ for arbitrary $w \in C^+$. %Consider set $W$ given by \eqref{eq:PinftyQCQP} and let $w \in C^+$. 
 	If $\sum_{i=1}^q w_i P_i \neq 0$, then the choice of $\nu = 0$ gives $0 \neq \sum_{i=1}^q w_i P_i + \sum_{j=1}^p \nu_j Q_j \succeq 0$. If $\sum_{i=1}^q w_i P_i = 0$, then the choice of $\nu_1 = 1, \nu_2 = \dots, \nu_p = 0$ gives $0 \neq \sum_{i=1}^q w_i P_i + \sum_{j=1}^p \nu_j Q_j \succeq 0$. This shows that 
 	$$\left\lbrace w \in C^+ \mid \exists \nu \in \R^p_+ : 0 \neq \sum_{i=1}^q w_i P_i + \sum_{j=1}^p \nu_j Q_j \succeq 0 \right\rbrace = C^+.$$
 	Together with \eqref{eq:PinftyQCQP}, this implies that $W=C^+$.
 	\item By (a), it is sufficient to consider problems without nonlinear constraints, that is, $p=0$. In this case, the cone $W$ given by \eqref{eq:PinftyQCQP} simplifies to 
 	\begin{align*}
 		\left\lbrace w \in C^+ \mid 0 \neq \sum_{i=1}^q w_i P_i \succeq 0 \right\rbrace  \cup 
 		\left\lbrace w \in C^+ \mid \sum_{i=1}^q w_i P_i =0, \exists \mu \geq 0 : \  \sum_{i=1}^q w_i d_i + A^\T \mu = 0\right\rbrace.
 	\end{align*} Since the $C$-convexity of the objective implies $\sum_{i=1}^q w_i P_i \succeq 0$ for all $w \in C^+$, we can write $W = (C^+\setminus W_1) \cup W_2,$ where 
 	\begin{align} \label{eq:W1set}
 		\begin{split}
 		W_1 &:= \left\lbrace w \in C^+ \mid \sum_{i=1}^q w_i P_i = 0 \right\rbrace , \\
 		W_2 &:= \left\lbrace w \in C^+ \mid \exists \mu \geq 0 : \ \sum_{i=1}^q w_i P_i =0,   \sum_{i=1}^q w_i d_i + A^\T \mu = 0\right\rbrace. 
 		\end{split}
 	\end{align} 
 	\end{enumerate}
 	If the matrices $P_1, \dots, P_q$ are linearly independent, then $W_1 = \{0\}$, since  $\sum_{i=1}^q w_i P_i = 0$	occurs only for $w=0$. Since $0 \in W_2$ and $W = (C^+\setminus W_1) \cup W_2$, we conclude $W=C^+$.
\end{proof}

\begin{proposition}\label{prop:quad2}
	Consider problem \eqref{QVP} and assume that the problem is nonlinear, that is, there is at least one nonlinear constraint or objective function. If $C = \R^q_+$, then $\P_\infty^+ = \R^q_+$. %We have $\P_\infty^+ = C^+$ in each of the following cases:	
\end{proposition}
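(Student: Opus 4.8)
The plan is to reduce the statement to the previous proposition together with the structure of $W$ in \eqref{eq:PinftyQCQP}, using the special ordering cone $C = \R^q_+$. We always have $\P_\infty^+ = \cl W \subseteq \cl C^+ = C^+$ by \Cref{prop:weightset} and the fact that $W \subseteq C^+$ by definition; so it suffices to show $C^+ \subseteq \cl W$, i.e. that $\cl W = \R^q_+$. By \Cref{prop:quad1}(a), if there is a nonlinear constraint ($p > 0$) we already get $W = \R^q_+$ and we are done. So the only remaining case is a nonlinear objective with $p = 0$, and here I would invoke the representation $W = (C^+ \setminus W_1) \cup W_2$ with $W_1, W_2$ as in \eqref{eq:W1set}. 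The goal then becomes: show $\cl\bigl( (\R^q_+ \setminus W_1) \cup W_2 \bigr) = \R^q_+$.

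First I would handle the ``easy'' inclusion $W_2 \supseteq$ something useful: since $\sum_i w_i d_i + A^\T\mu = 0$ together with $\sum_i w_i P_i = 0$ is needed, and for $C = \R^q_+$ a natural observation is that the set $W_1 = \{w \in \R^q_+ \mid \sum_i w_i P_i = 0\}$ is a \emph{face} of the orthant $\R^q_+$ — it is exactly $\cone\{e_i \mid P_i = 0\}$ because $C$-convexity gives $P_i = \sum_i (e_i)_i P_i \succeq 0$ for each standard basis vector $e_i \in \R^q_+$, and a sum of positive semidefinite matrices with nonnegative coefficients vanishes iff each term with positive coefficient vanishes. Hence $W_1$ is a (possibly lower-dimensional) coordinate subcone of $\R^q_+$, and $\R^q_+ \setminus W_1$ is dense in $\R^q_+$ unless $W_1 = \R^q_+$. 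If $W_1 \neq \R^q_+$, then $\cl(\R^q_+\setminus W_1) = \R^q_+$ and we are done immediately. The remaining subcase is $W_1 = \R^q_+$, i.e. $P_i = 0$ for all $i$, so the objective is actually \emph{linear} — contradicting the assumption that the problem is nonlinear (recall $p=0$ here).

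So the structure of the argument is: (i) reduce to $p = 0$ via \Cref{prop:quad1}(a); (ii) use $C$-convexity and the semidefinite-cone structure to identify $W_1$ as a coordinate face of $\R^q_+$ generated by the $e_i$ with $P_i = 0$; (iii) if this face is proper, density of the complement finishes it; (iv) if the face is everything, the objective is linear, which is excluded. I expect the main obstacle to be step (ii) — specifically, justifying cleanly that $\sum_i w_i P_i = 0$ with $w \in \R^q_+$ forces $w_i = 0$ whenever $P_i \neq 0$, which relies on the fact that each $P_i \succeq 0$ (a consequence of $C$-convexity since $e_i \in C^+ = \R^q_+$) and that positive semidefinite matrices ``do not cancel.'' Once that is in place, the rest is routine topology of the orthant. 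A small care point is making sure the reduction in (iv) genuinely uses the hypothesis ``nonlinear,'' which for $p=0$ means precisely that not all $P_i$ vanish.
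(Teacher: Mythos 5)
Your proposal is correct and follows essentially the same route as the paper: reduce to $p=0$ via Proposition~\ref{prop:quad1}(a), use the decomposition $W=(C^+\setminus W_1)\cup W_2$, exploit $P_i\succeq 0$ (from $C$-convexity with $C^+=\R^q_+$) to show $W_1$ only involves coordinates with $P_i=0$, and conclude by density of $\R^q_+\setminus W_1$ in $\R^q_+$. The only cosmetic difference is that the paper establishes density via explicit perturbations $w+\tfrac1n e^i$ and verifies $P_j=0$ through the diagonal entries, while you package the same facts as ``$W_1$ is a proper coordinate face, hence has dense complement.''
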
 

\begin{proof}

By \Cref{prop:quad1} (a), it is sufficient to consider problems without nonlinear constraints, that is, $p=0$. In this case, $W = (C^+\setminus W_1) \cup W_2,$ where $W_1,W_2$ are as in \eqref{eq:W1set}. If $W_1 = \{0\}$, then $\P_\infty^+ = C^+$ follows since $\P_\infty^+ = \cl W$. Assume $w \in W_1 \setminus\{0\}$. Noting that $C^+ = \R^q_+$, $w_j>0$ for some $j\in \{1,\ldots,q\}$. Consider the diagonal elements of the matrix $\sum_{i=1}^q w_i P_i = 0$. Since the matrices $P_1, P_2, \dots, P_q$ are positive semidefinite for $C=\R^q_+$, all of their diagonal elements are nonnegative. Then, $\sum_{i=1}^q w_i P_i = 0$ implies for $w_j > 0$ that all the diagonal elements of matrix $P_j$ are zero and, therefore, $P_j$ is the zero matrix. 

Since the problem is not linear, there exists $i \in\{1, \dots, q\}$ with $P_i \neq 0$. Then, for any $w \in W_1$ we can construct a sequence of $w^{(n)} := w + \frac{1}{n}e^i \in \mathbb{R}^q_+ \setminus W_1$ converging to $w$. Hence, we conclude $\P_\infty^+ = \cl \left( \mathbb{R}^q_+ \setminus W_1 \right) = \mathbb{R}^q_+$.
%\end{enumerate}
\end{proof}

We see that the computation of $\P_\infty^+$ is only relevant if $C \neq \R^q_+$, \eqref{QVP} has only linear constraints and $P_1, \dots, P_q$ are linearly dependent. In that case, it can be done via computing sets $W_1, W_2$ given by \eqref{eq:W1set} and setting $\P_\infty^+ = \cl\big((C^+\setminus W_1) \cup W_2\big).$ As long as the ordering cone is polyhedral, $W_2$ is in the form of a polyhedral projection, so $\P_\infty^+$ can be obtained through computations with polyhedra.

\section{Numerical Examples} \label{sect:comp}
In this section, we provide numerical examples to illustrate the proposed solution methodology. We consider a two-dimensional {and a three-dimensional} linear problem and two semidefinite programming problems with different objective functions minimized over the same feasible set.
\begin{example}\label{example1} Consider the illustrative two-dimensional linear example
\begin{align*}
\min \begin{pmatrix} x_1 \\ x_2 \end{pmatrix} \text{ w.r.t. } \leq_{\R^2_+} \text{ s.t. } 
\begin{pmatrix}
-4 & -1 \\ -2 & -1 \\ -1 & -1 \\ -1 & -2 \\ -1 & -4
\end{pmatrix}
\begin{pmatrix} x_1 \\ x_2 \end{pmatrix} \leq
\begin{pmatrix}
-5 \\-5 \\ -4 \\ -5 \\ -5
\end{pmatrix}.
\end{align*}
As outlined in Section~\ref{subsect:linear}, to identify the recession cone of the upper image, it suffices to solve two scalar linear problems, 
\begin{align*}
\text{minimize / maximize } \; \lambda \quad \text{ subject to } \begin{pmatrix} \lambda\\ 1- \lambda \end{pmatrix} \geq 0, \;  y \geq 0, \;
\begin{pmatrix}
4 & 2 & 1 & 1 & 1 \\
1 & 1 & 1 & 2 & 4
\end{pmatrix} y = \begin{pmatrix} \lambda\\ 1- \lambda \end{pmatrix}.
\end{align*}
 These yield the optimal values $\lambda_{\min} = 0.2$ and $\lambda_{\max} = 0.8$, which generate the dual cone $W = \P_{\infty}^+ = \cone \left\lbrace \begin{pmatrix} 0.2 \\ 0.8 \end{pmatrix}, \begin{pmatrix} 0.8 \\ 0.2 \end{pmatrix} \right\rbrace$ and, consequently, the recession cone of upper image $\P_{\infty} = \cone \left\lbrace \begin{pmatrix} -1 \\ 4 \end{pmatrix}, \begin{pmatrix} 4 \\ -1 \end{pmatrix} \right\rbrace$. The dual cone of weights $W$, the recession cone $\mathcal{P}_\infty$, and the upper image $\mathcal{P}$ are depicted in Figure~\ref{fig1}.

\begin{figure}
\centering
\begin{subfigure}{.4\textwidth}
  \centering
  \includegraphics[width=\linewidth]{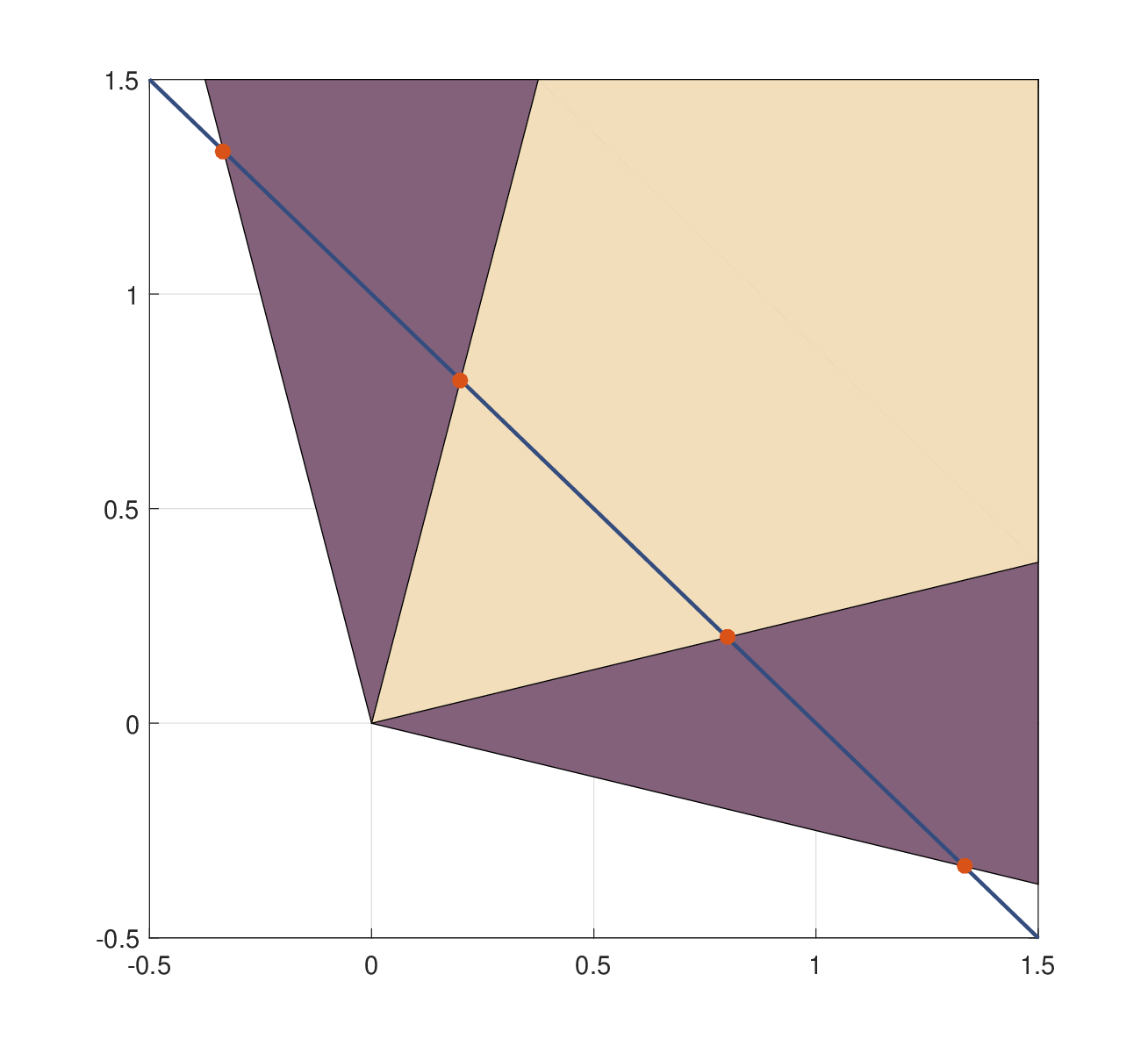}
  %\caption{Recession cone (dark purple) and the set of weights for which the problem is bounded (lighter yellow). The depicted line $w_1 + w_2 = 1$ represents the choice of basis of the cones, which is used for the two scalar problems solved.}
%  \label{fig:sub1}
\end{subfigure}%
\begin{subfigure}{.4\textwidth}
  \centering
  \includegraphics[width=\linewidth]{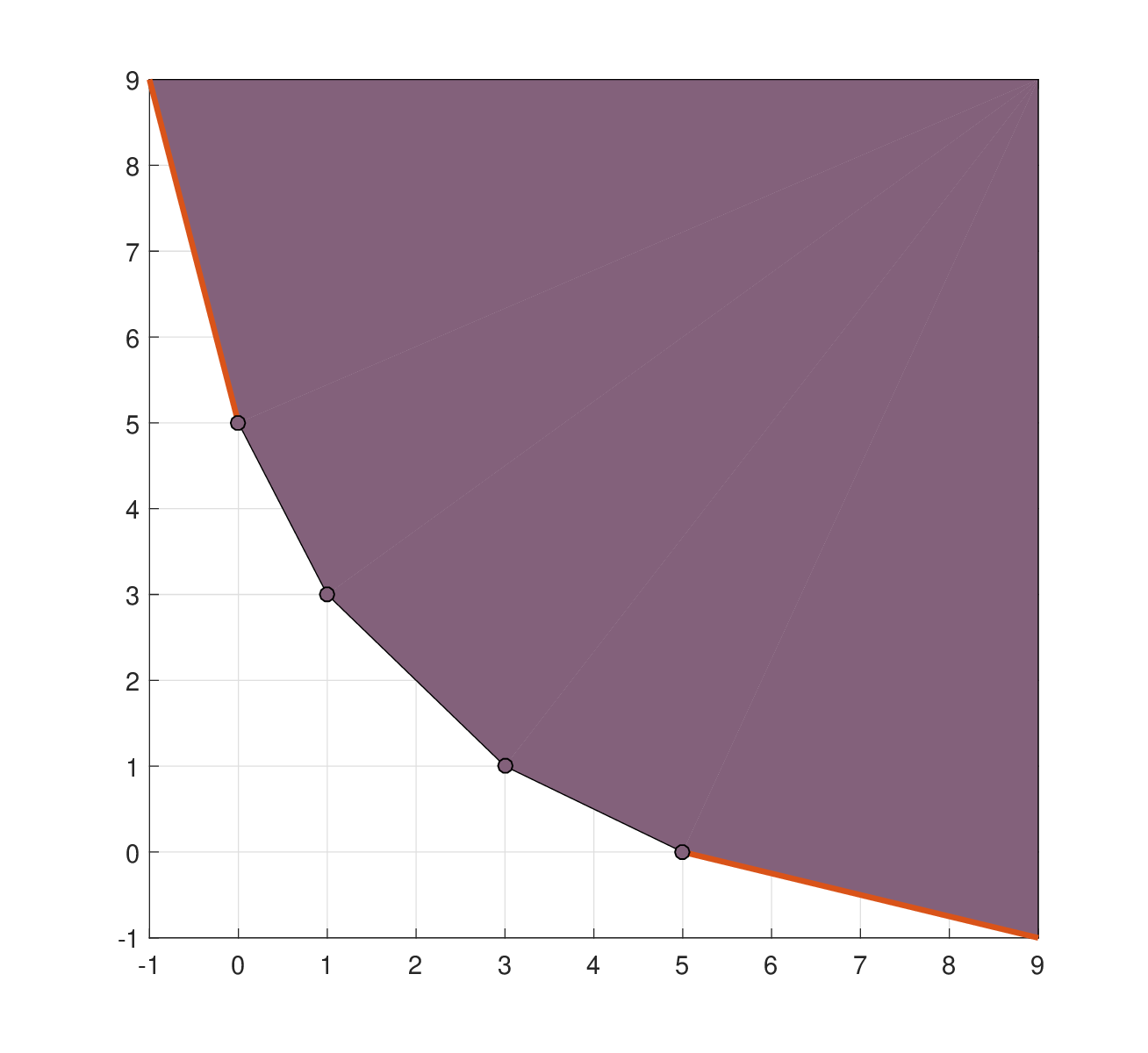}
  %\caption{Upper image with highlighted recession direction. }
%  \label{fig:sub2}
\end{subfigure}
\caption{\label{fig1}Linear problem from Example~\ref{example1}. Left: Recession cone $\mathcal{P}_\infty$ (dark purple) and the set of weights $W$ (lighter yellow). The depicted line $w_1 + w_2 = 1$ represents the choice of base of the cones, which is used for the two scalar problems solved. Right: Upper image with highlighted recession direction. }
\end{figure}

\end{example}

{\begin{example}\label{ex:lin3d}
	Consider the three-dimensional linear problem 
	\begin{align*}
		\min \begin{pmatrix} x_1 \\ x_2 \\x_3 \end{pmatrix} \text{ w.r.t. } \leq_{C} \text{ s.t. } 
		\begin{pmatrix}
			-1 & -1 & -1 \\ -4 & -1 & -1 \\ -1 & -4 & -1 \\ -1 & -1 & -4 \\ -1 & -1 & \:\:\: 0 \\ -1 & \:\:\: 0 & -1 \\ \:\:\:0 & -1 & -1 
		\end{pmatrix}
		\begin{pmatrix} x_1 \\ x_2 \\ x_3 \end{pmatrix} \leq
		\begin{pmatrix}
			-16 \\-16 \\ -16 \\ -16 \\ -10 \\ -10 \\ -10
		\end{pmatrix},
	\end{align*}
where the ordering cone is $C = \cone \left\lbrace \begin{pmatrix} 4 \\ 2 \\ 2 \end{pmatrix}, \begin{pmatrix} 2 \\ 4 \\ 2 \end{pmatrix}, \begin{pmatrix} 4 \\ 0 \\ 2 \end{pmatrix}, \begin{pmatrix} 1 \\ 0 \\ 2 \end{pmatrix}, \begin{pmatrix} 0 \\ 1 \\ 2 \end{pmatrix}, \begin{pmatrix} 0 \\ 4 \\ 2 \end{pmatrix} \right\rbrace$. We take $c = (1, 1,1)^\T \in \Int C$, that is, $w: \R^2 \to \R^3$ is given by $w(\lambda)=(\lambda_1,\lambda_2,1-\lambda_1-\lambda_2)$. As explained in \Cref{subsect:linear}, it is possible to compute the set $\Lambda \subseteq \R^2$ by solving a bounded linear projection problem. The sets $W, \P_{\infty},$ and $\Lambda$ are displayed in \Cref{fig3d}.

\begin{figure}
	\centering
	\begin{subfigure}{.55\textwidth}
		\centering
		\includegraphics[width=\linewidth]{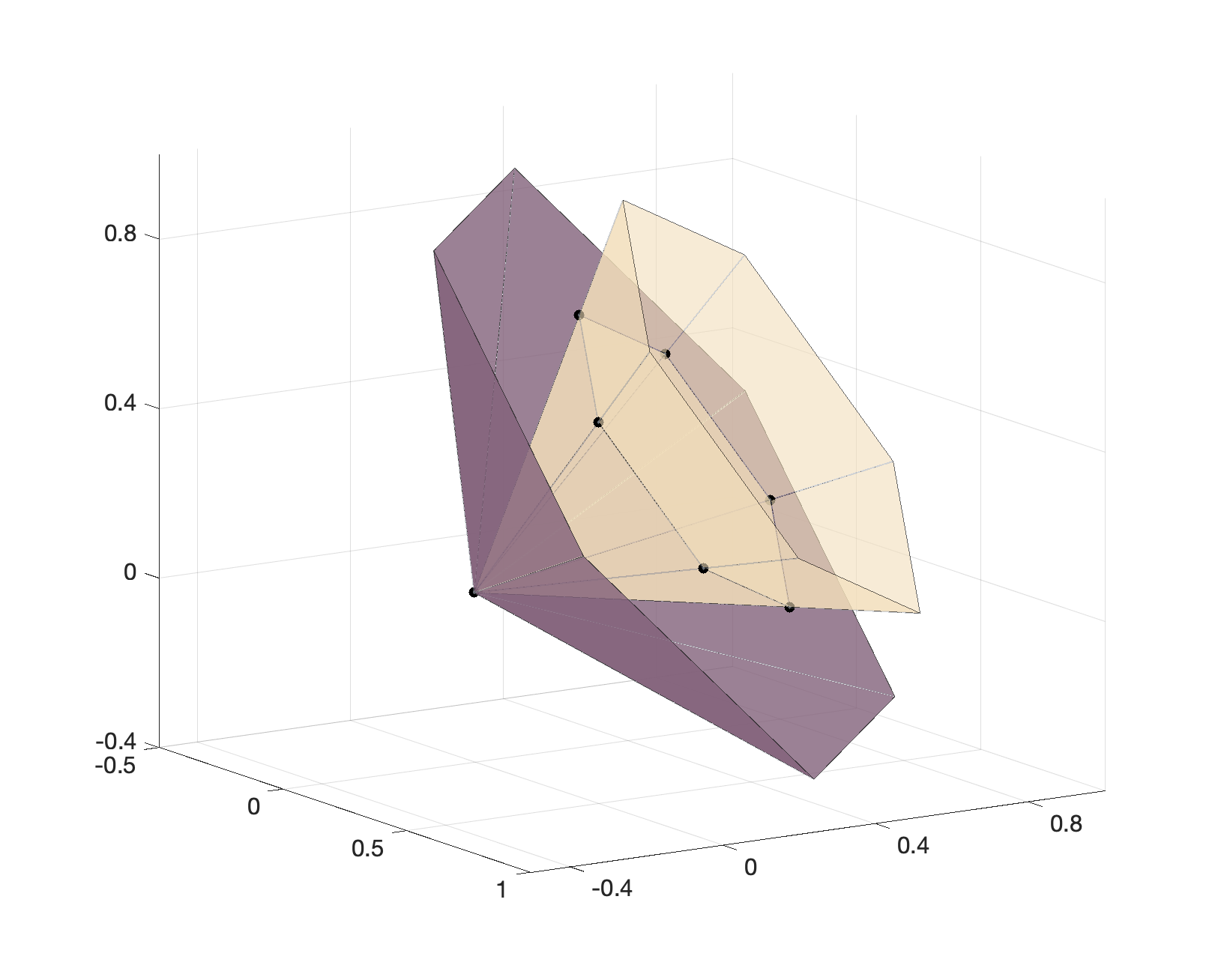}
		%\caption{Recession cone (dark purple) and the set of weights for which the problem is bounded (lighter yellow). The depicted line $w_1 + w_2 = 1$ represents the choice of basis of the cones, which is used for the two scalar problems solved.}
		%  \label{fig:sub1}
	\end{subfigure}%
	\begin{subfigure}{.25\textwidth}
		\centering
		\includegraphics[width=\linewidth]{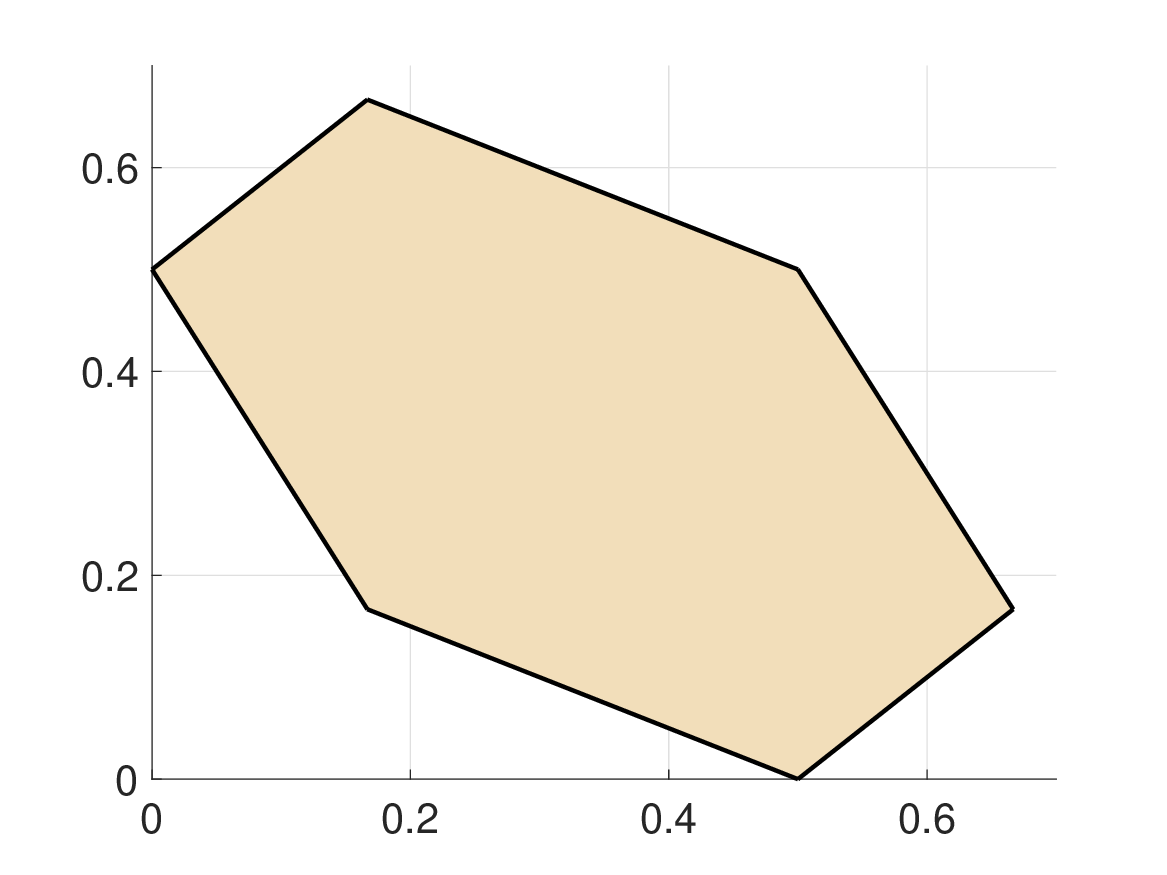}
		%\caption{Upper image with highlighted recession direction. }
		%  \label{fig:sub2}
	\end{subfigure}
	\caption{\label{fig3d} {Linear problem from Example~\ref{ex:lin3d}. Left: Recession cone $\mathcal{P}_\infty$ (dark purple) and the set of weights $W$ (lighter yellow). The depicted plane $w_1 + w_2 + w_3 = 1$ represents the choice of the base of cone $W$. Right: The set $\Lambda$.} }
\end{figure}

	\end{example}
}
\begin{example}\label{example2}
We consider the semidefinite problem
\begin{align*}
\text{minimize } & \quad P^\T x  \quad \text{ with respect to\ } \leq_{\mathbb{R}^3_+} \\ \text{ subject to } & \quad x_1 \begin{pmatrix} -1 & 2 \\ 2 & 4 \end{pmatrix} + x_2 \begin{pmatrix} 2 & 1 \\ 1 & -1 \end{pmatrix} + x_3 \begin{pmatrix} 2 & 2 \\ 2 & 2 \end{pmatrix} \preceq 0
\end{align*}
for objectives given by matrices
\begin{align*}
P_1 = \begin{pmatrix}
0 & 0 & -1\\ -1 & 1 & 0\\ 1 & 1 & -1
\end{pmatrix} \text{ and } P_2 = \begin{pmatrix}
1 &0 &-1\\ -1 &1 &0\\ 0 &0 &-1
\end{pmatrix}.
\end{align*}

We find an approximation of the cone of recession directions
$$W = \{w \in \mathbb{R}^3_+ \mid \exists Z \succeq 0: \ \tr (F_i Z) + e_i^T Pw = 0, \; i = 1,2,3 \}$$
through the convex projection problem of (approximately) computing the set
\begin{align*}
W_c = \{ w \in \mathbb{R}^3_+ \mid \exists Z \succeq 0: \ \tr (F_i Z) + e_i^T Pw = 0, \; i = 1,2,3, \; w_1 + w_2 + w_3 \leq \sqrt{3} \}.
\end{align*}
The convex projection yields both inner and outer approximations of the set $W_c$, which generate inner and outer approximations of cones of $W$ and $\mathcal{P}_\infty$. 
All of them are displayed in Figure~\ref{fig2} for the problem with objective $P_1$. Outer approximation of $\mathcal{P}_\infty$, for which approximation tolerance is guaranteed by Propositions~\ref{prop:aprox_error} and~\ref{prop:Wc}, is needed as a part of a solution.

%[Include Fig. 2 for $\epsilon = 0.01$ or Fig. 3 for $\epsilon = 0.08$? I would only leave one of them.]

In Figure~\ref{fig4} we use the problem with objective $P_2$ to compare approximations obtained via the set $W_c$ and the set $\Lambda$. Recall that we only have tolerance guarantees for approach through the set $W_c$.

\begin{figure}[H]
\centering
\begin{subfigure}{.4\textwidth}
	\centering
	\includegraphics[width=\linewidth]{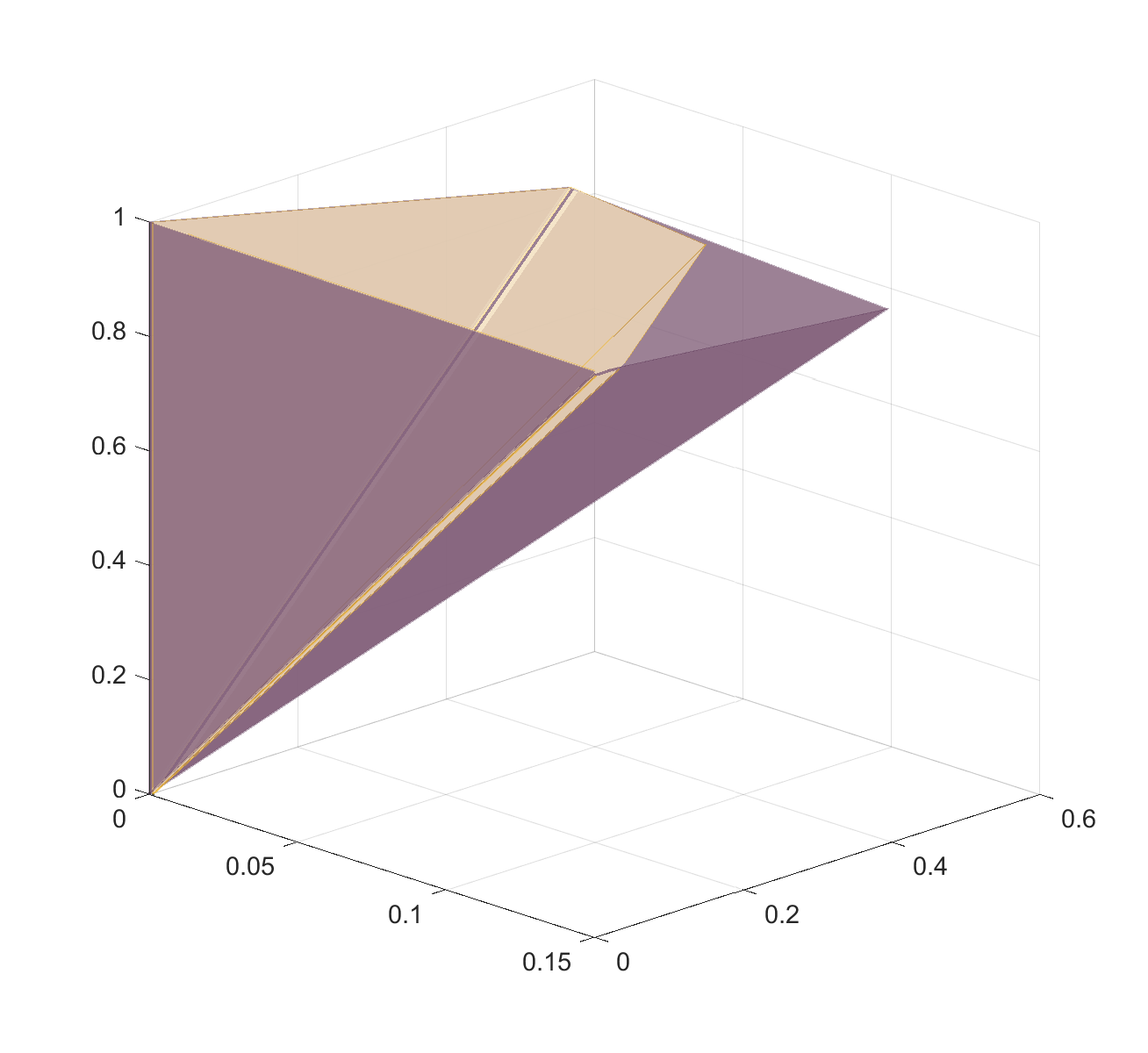}
\end{subfigure}%
\begin{subfigure}{.4\textwidth}
	\centering
	\includegraphics[width=\linewidth]{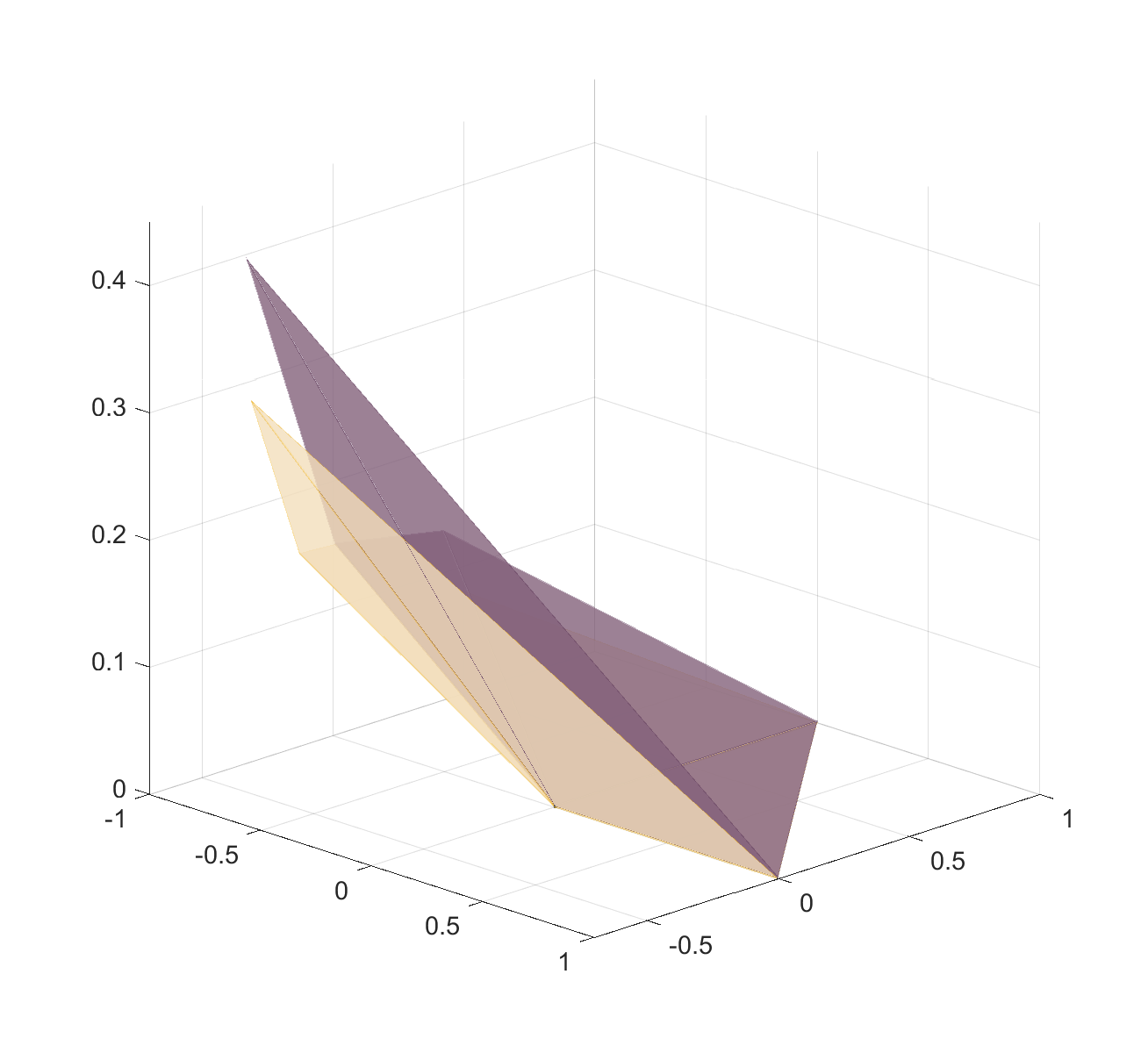}
\end{subfigure}\\
\begin{subfigure}{.4\textwidth}
	\centering
	\includegraphics[width=\linewidth]{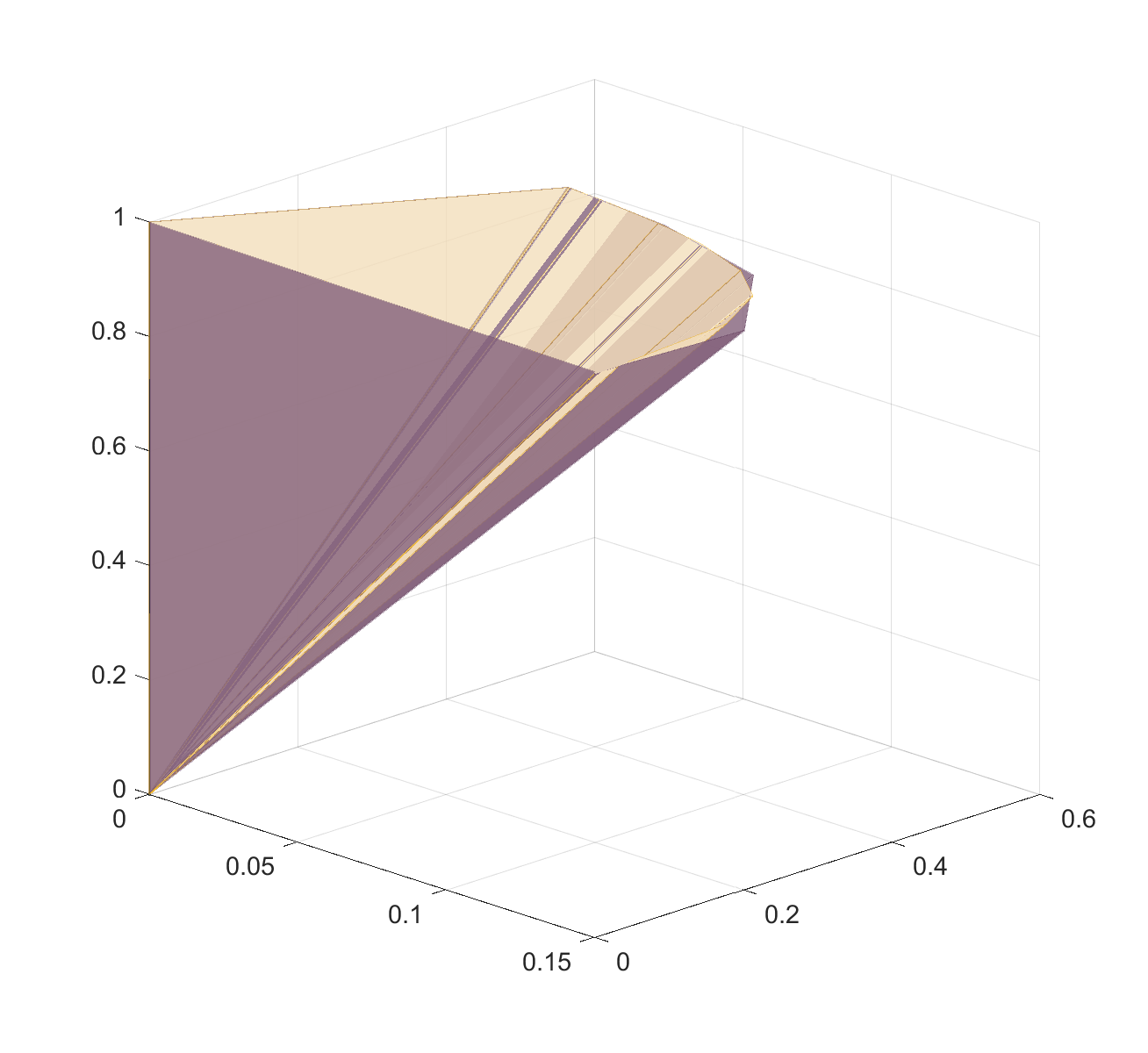}
\end{subfigure}%
\begin{subfigure}{.4\textwidth}
	\centering
	\includegraphics[width=\linewidth]{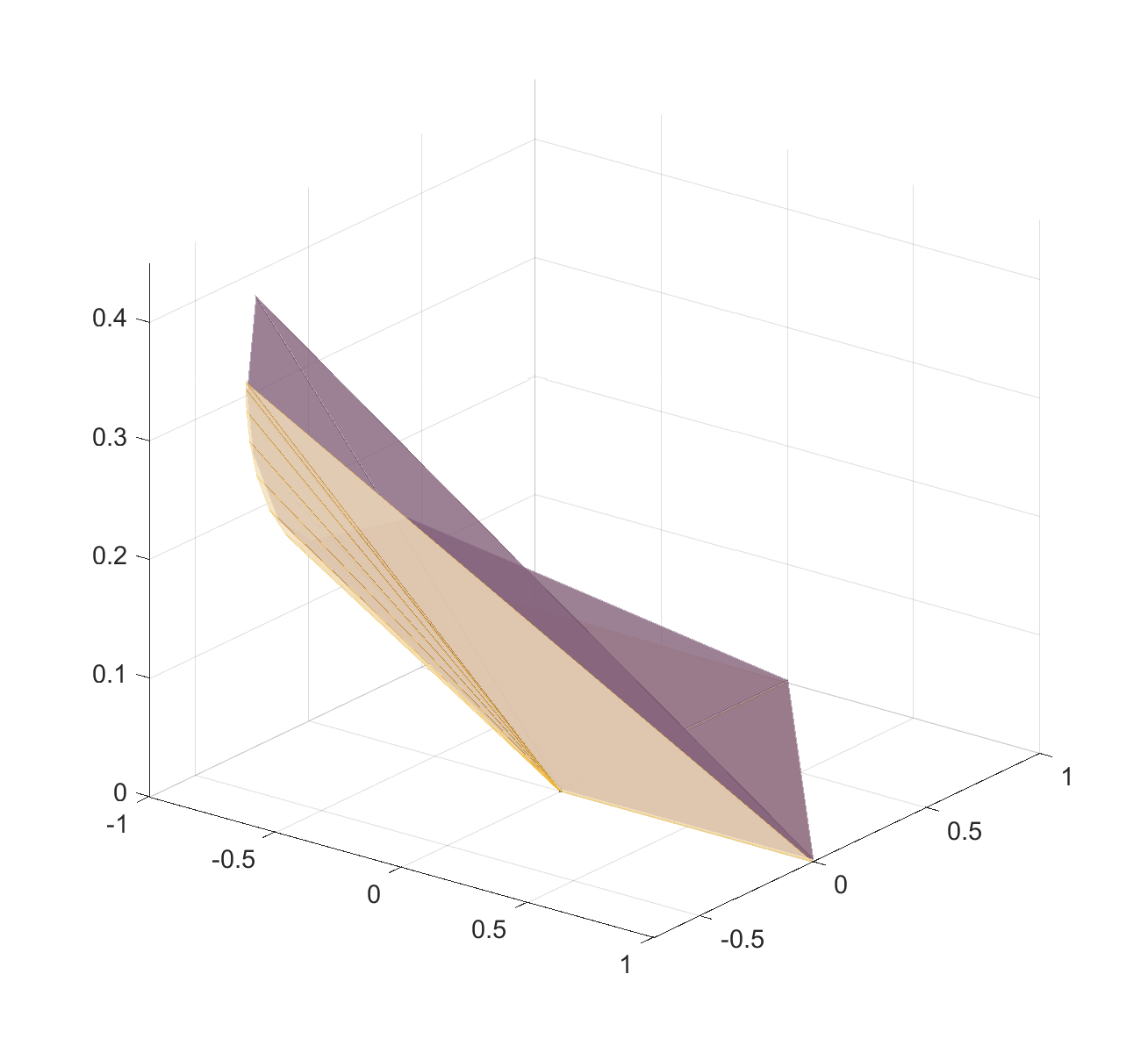}
\end{subfigure} 
\caption{\label{fig2} Semidefinite problem from Example~\ref{example2} with objective $P_1$ solved for $\epsilon = 0.08$ (top) and $\epsilon=0.01$ (bottom). Displayed are inner and outer approximations of the set $W$ (left) and the recession cone $\mathcal{P}_\infty$ (right).}
\end{figure}

%\begin{figure}
%\centering
%\begin{subfigure}{.45\textwidth}
%  \centering
%  \includegraphics[width=\linewidth]{Matlab/fig/SemidefEx1B_W}
%%  \caption{A subfigure}
%%  \label{fig:sub1}
%\end{subfigure}%
%\begin{subfigure}{.45\textwidth}
%  \centering
%  \includegraphics[width=\linewidth]{Matlab/fig/SemidefEx1B_Rec}
%%  \caption{A subfigure}
%%  \label{fig:sub2}
%\end{subfigure}
%\caption{\label{fig3}Semidefinite problem from Example~\ref{example2} with objective $P_1$ solved for $\epsilon = 0.08$. Displayed are inner and outer approximations of the set $W$ (left) and the recession cone $\mathcal{P}_\infty$ (right).}
%\end{figure}

\begin{figure}
\centering
\begin{subfigure}{.4\textwidth}
  \centering
  \includegraphics[width=\linewidth]{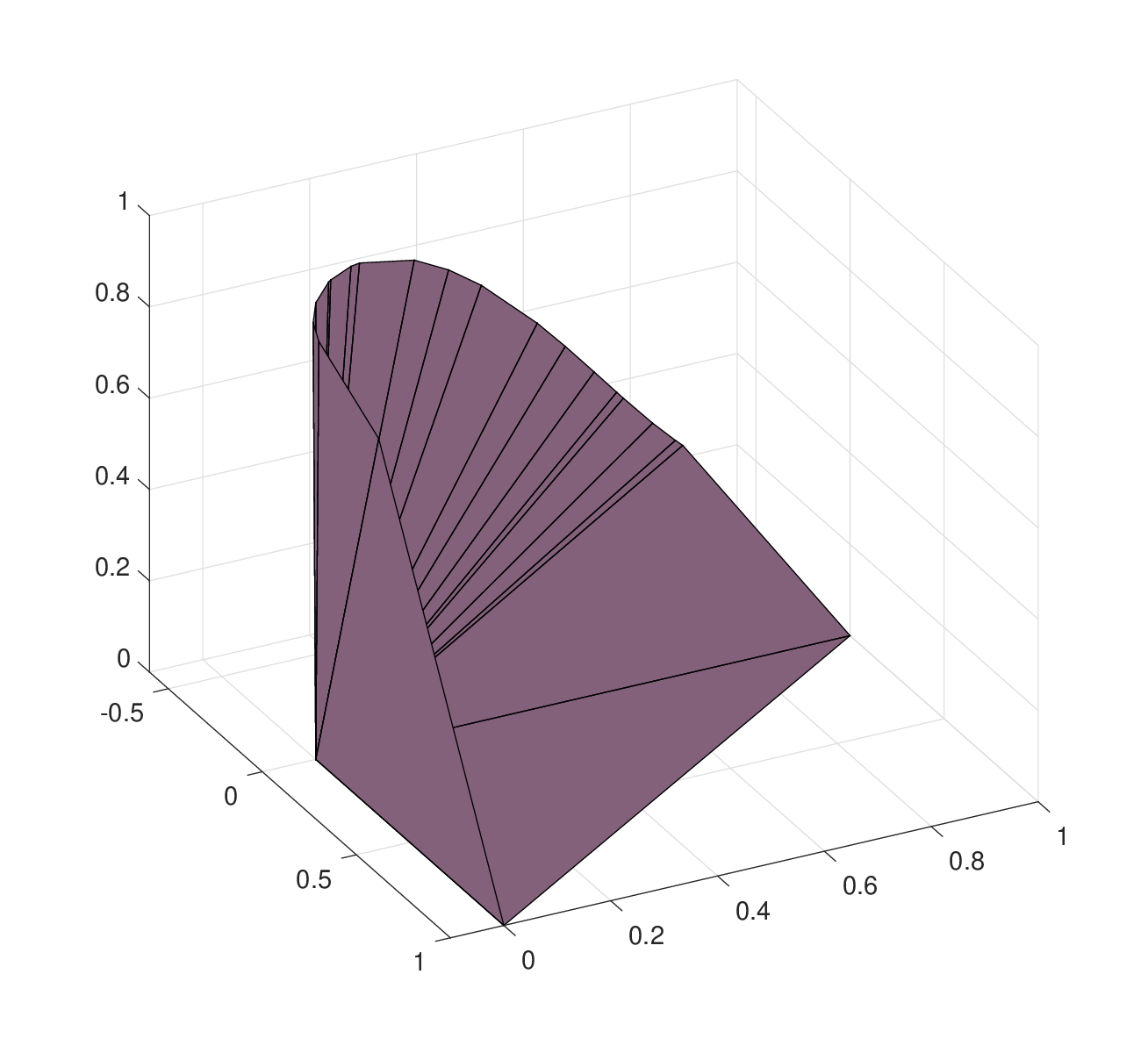}
%  \caption{A subfigure}
%  \label{fig:sub1}
\end{subfigure}%
\begin{subfigure}{.4\textwidth}
  \centering
  \includegraphics[width=\linewidth]{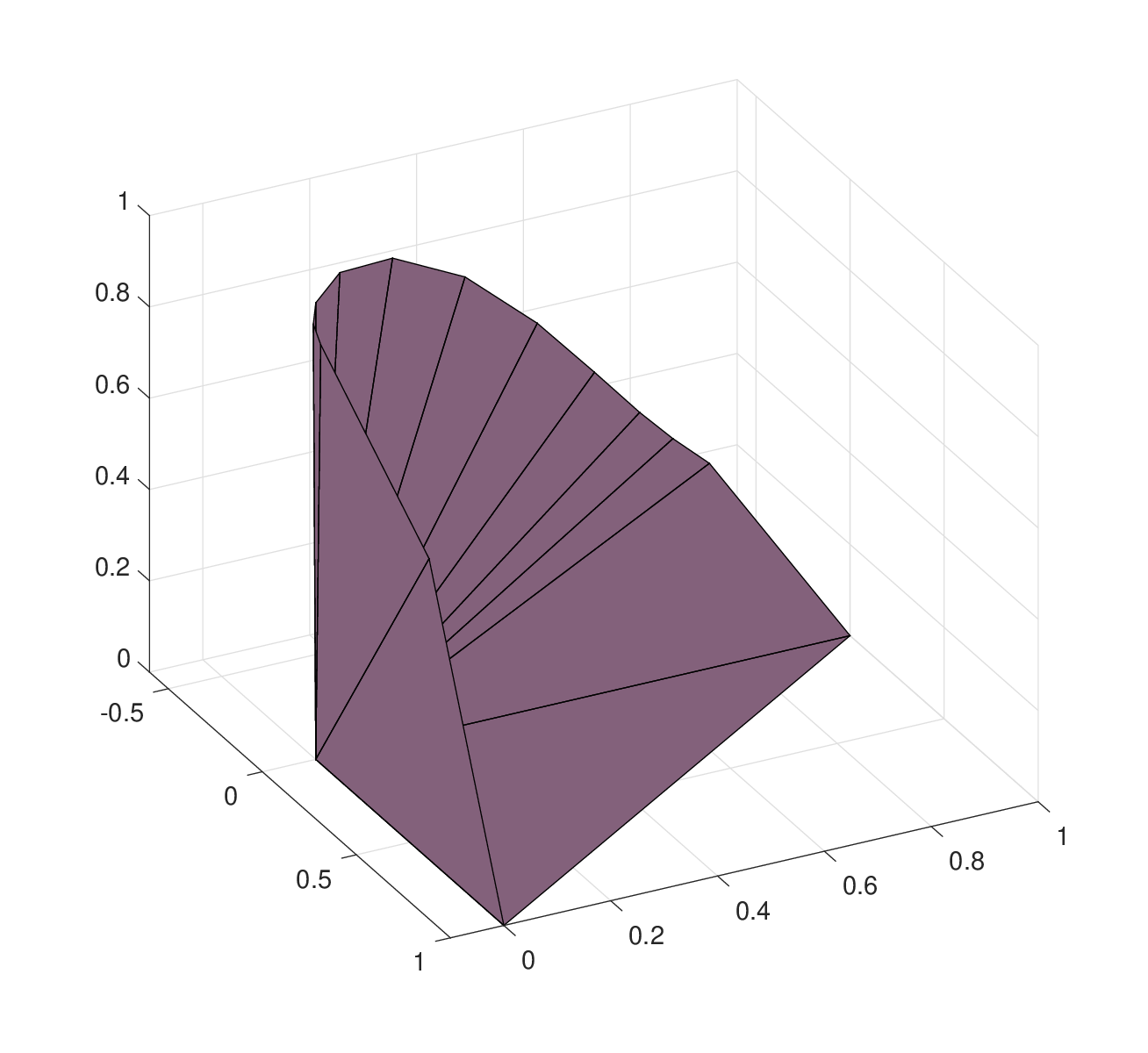}
%  \caption{A subfigure}
%  \label{fig:sub2}
\end{subfigure}
\caption{\label{fig4}Recession cone of the semidefinite problem from Example~\ref{example2} with objective $P_2$. Compare the approximations of $\mathcal{P}_\infty$ obtained via the set $W_c$ (left) and via the set $\Lambda$ (right), both convex projections were solved for tolerance $\epsilon = 0.005$. %\gk{In one plot they overlap too much/hide one another, so they can't be compared well.}
}
\end{figure}

\end{example}

\section*{Acknowledgments}
G. Kov\'{a}\v{c}ov\'{a} and F. Ulus acknowledge support from the OeNB anniversary fund, project number 17793. {The authors thank Daniel D\"orfler for bringing their attention to the results of \cite{walkup1967continuity}. }
\section*{Declarations}
This manuscript has no associated data. 

\bibliographystyle{plain}
\bibliography{references2}

\end{document}